
\documentclass[aoas]{imsart}

\RequirePackage{amsthm,amsmath,amsfonts,amssymb}
\RequirePackage[authoryear]{natbib}
\RequirePackage[colorlinks,citecolor=blue,urlcolor=blue]{hyperref}
\RequirePackage{graphicx}
\usepackage[clean]{revdiff} 
\startlocaldefs
\theoremstyle{plain}
\newtheorem{theorem}{Theorem}[section]
\newtheorem{lemma}[theorem]{Lemma}
\newtheorem{proposition}[theorem]{Proposition} 

\theoremstyle{remark}
\newtheorem{definition}[theorem]{Definition}
\newtheorem{remark}[theorem]{Remark}
\newtheorem{assumption}[theorem]{Assumption}
\newtheorem{example}{Example}

\newcommand{\norm}[1]{\left\|#1 \right\|}
\def\E{\mathbb E}
\def\R{\mathbb R}
\def\tr{{\rm tr}}
\newcommand{\dd}{{\rm d}}
\newcommand{\lb}{\left}
\newcommand{\rb}{\right}

\DeclareMathOperator*{\argmax}{arg\,max}
\definecolor{newcolor}{rgb}{0,0.50,0}

\endlocaldefs

\begin{document}

\begin{frontmatter}
\title{Stability estimates for the expected utility in Bayesian optimal experimental design}
\runtitle{Stability for Bayesian optimal experimental design}

\begin{aug}
\author[A]{\fnms{Duc-Lam}~\snm{Duong}\ead[label=e1]{duc-lam.duong@lut.fi}}\thanks{{\it Address for correspondence:} Department of Computational Engineering, School of Engineering Science, LUT University, Yliopistonkatu 34, 53850 Lappeenranta, Finland.},
\author[A]{\fnms{Tapio}~\snm{Helin}\ead[label=e2]{tapio.helin@lut.fi}}
\and
\author[A]{\fnms{Jose Rodrigo}~\snm{Rojo-Garcia}\ead[label=e3]{rodrigo.rojo.garcia@lut.fi}}


\address[A]{Computational Engineering, LUT School of Engineering Science, Lappeenranta-Lahti University of Technology, Finland\printead[presep={,\ }]{e1,e2,e3}}
\end{aug}

\begin{abstract}
We study stability properties of the expected utility function in Bayesian optimal experimental design. We provide a framework for this problem in a non-parametric setting and prove a convergence rate of the expected utility with respect to a likelihood perturbation. This rate is uniform over the design space and its sharpness in the general setting is demonstrated by proving a lower bound in a special case. To make the problem more concrete we proceed by considering non-linear Bayesian inverse problems with Gaussian likelihood and prove that the assumptions set out for the general case are satisfied and regain the stability of the expected utility with respect to perturbations to the observation map. Theoretical convergence rates are demonstrated numerically in three different examples.
\end{abstract}

\begin{keyword}
\kwd{Bayesian optimal experimental design}
\kwd{inverse problems}
\kwd{stability}
\end{keyword}

\end{frontmatter}


\section{Introduction}

Acquisition of high quality data is the crux of many 
challenges in science and engineering. An outstanding example is the parameter estimation problem in statistical models. Namely, data collection, whether in field experiments or in laboratory, is often restricted by limited resources. It can be difficult, expensive and time-consuming, which puts severe limits on the quality of data acquired. To maximize the value of data for inference and minimize the uncertainty of the estimated parameters, one has to design the experiment (for instance, placing the sensors) in a way that is as economical and efficient as possible. This involves choosing the values of the controllable variables before the experiment takes place. Carefully designed experiments can make a substantial difference in accomplishing the tasks in an appropriate manner. \emph{Optimal experimental design} (OED) is a mathematical framework where a set of design variables with certain optimal criteria (based on the information matrix derived from the model) are satisfied (\cite{SH84, Puk06}). The most common criteria for OED include A-optimality and D-optimality which, in finite dimensions, seeks to minimize the trace and the determinant of the Fisher information matrix, respectively.

We adopt a Bayesian approach to OED \citep{CV95} that formulates the task as a maximization of an expected utility. Suppose $X$ denotes our unknown parameter, $Y$ stands for the observation and $d$ the design parameter. The expected utility $U$ is given by
\begin{equation}
    \label{eq:main_utility}
	U(d) = \E^\mu u(X,Y; d),
\end{equation}
where $u(X,Y; d)$ denotes the utility of an estimate $X$ given observation $Y$ with design $d$, and the expectation is taken w.r.t. the joint distribution $\mu$ of $X$ and $Y$.
The task of optimizing $U(d)$ is notoriously expensive especially if the design space is large or $X$ and $Y$ are modelled in high-dimensional space \citep{RDMP16}.

The guiding set of design problems for this paper is those emerging in modern inverse problems \citep{engl1996regularization}, which often involve imaging a high-dimensional object by indirect observations. Bayesian approach to inverse problems has gained considerable interest during the last two decades (\cite{kaipio2006statistical, Stu10, DS17}). The underlying mathematical model in inverse problems is often governed by partial differential equations (PDEs) giving rise to complex high-dimensional likelihood models.


Exploring the complex high-dimensional posterior distributions in Bayesian inversion is computationally costly but has become standard in literature in recent years due to developments in Monte Carlo (MC) based sampling schemes and the growth of available computational resources. To accelerate this task, various approximation schemes such as surrogate modelling of the likelihood distribution can also be applied including e.g. polynomial chaos expansion \citep{marzouk2007stochastic, schillings2013sparse} or neural networks \citep{herrmann2020deep}. Moreover, stability of the posterior distribution with respect to such approximations is well-understood (see \cite{sprungk2020local, garbuno2023bayesian} and reference therein). 




In the OED framework, the computational effort compared to conventional Bayesian inversion is significantly larger due to the double expectation and optimization task. In consequence, approximation schemes have a key role in producing optimal designs in that framework.
However, questions about the stability of the expected utility have received limited attention. To the best of our knowledge, it has been only considered in terms of a fixed design, i.e., pointwise convergence of the expected utility. Tempone and others in \citep{beck2018fast, beck2020multilevel, long2015laplace} have developed approximation results for nested MC methods with and without utilising an additional Laplace approximation and analyse the optimal parametrisation of the method with respect to the approximation error versus computational effort. Since MC approximation is random, any such error bound is expressed in probabilistic terms. In particular, \cite{beck2018fast} provides a recipe to achieve a given probabilistic error tolerance with optimal computational effort.
In another recent line of work, Foster and others \citep{foster2019variational} develop error analysis for variational methods being applied in combination with MC methods and optimize the depth of variational approximation to achieve a rate of convergence ${\mathcal O}((N+K)^{-\frac 12})$ for $N$ samples from MC and $K$ optimization steps for the variational model. 

For Bayesian OED tasks involving optimization on a continuous design manifold (as is often the case in inverse problems), pointwise convergence does not provide the full picture of the stability in terms of the optimization task.
Instead, uniform approximation rates of given numerical schemes in a neighbourhood around the optimal design are preferable in that regard.
In this work, we study the uniform stability of the expected utility in Bayesian OED systematically where changes in likelihood function or observation map can be observed. 


Non-parametric inference in Bayesian inverse problems and OED is motivated by 
the conviction to leave discretization until the last possible moment \citep{Stu10}, hence giving rise to opportunities to choose appropriate and robust discretization methods. Non-parametric approach for OED in Bayesian inverse problems has been formalized by Alexanderian (see \cite{alexanderian2021optimal} and references therein). Let us note that the infinite-dimensional setting arises naturally in numerous works involving Bayesian OED in inverse problems constrained by PDEs (\cite{alexanderian2014optimal, long2015fast, alexanderian2016fast, alexanderian2016bayesian, beck2018fast, wu2020fast}, to name a few), integral geometry \citep{haber2008numerical, ruthotto2018optimal, burger2021sequentially, helin2022edge} or nonlinear systems \citep{huan2013simulation, huan2014gradient}.

\subsection{Our contribution}
This work contributes to the rigorous study of the mathematical framework of Bayesian OED. 

\begin{itemize}
    \item We formulate the OED stability problem under a Bayesian framework in a non-parametric setting.
    We propose a set of assumptions (along the lines of \cite{Stu10}) under which the stability problem for OED can be addressed in a systematic way (Assumption \ref{ass:1}). In particular, we assume that the likelihood and its approximate version are close in the Kullback--Leibler divergence.
    \item We establish the convergence of the expected utility for a general approximation scheme satisfying Assumption \ref{ass:1} with a rate of one-half of the likelihood convergence rate (see Theorem \ref{thm:main1}). We demonstrate by a trivial example that a faster rate is not possible without further assumptions (see Example \ref{example:optimality}). 
    \item Together with the convergence of the surrogate expected utility, we prove that their maximizers converge, up to a subsequence, to a maximizer of the true expected utility (see Theorem \ref{thm:main2}). This ensures that the optimal design variable is also stable in the approximation. 
    \item As an important application, we consider some Bayesian inverse problems with Gaussian noise and their observation map can be replaced by some surrogate model.  We demonstrate that the assumptions we set out previously are satisfied in this case, given that the observation map and its surrogate model are close in certain norms (see Proposition \ref{prop:Gaussian} and Theorem \ref{thm:Gaussian}).
    \item Finally, we carry out numerical simulations on three different design problems. We observe that the rates predicted by our main theorems are aligned with the numerical results.
\end{itemize}

\subsection{Structure of the paper}
The paper is organised as follows. In Section \ref{sec:preliminaries}, we give an overview of the main objects of this article, including basic notions of non-parametric Bayesian inverse problems and Bayesian experimental design. We also summarize some background in probability measure theory, commonly used metrics between measures and introduce notations that will be used throughout this paper. In Section \ref{sec:stability}, we first outline the common framework including the general assumptions and main results. We proceed by establishing several lemmas and proving our main theorems. An important aspect of the main results is considered in Section \ref{sec:example} where we study the stability of OED for some Bayesian inverse problems with Gaussian noise.  In the last section, we provide three numerical examples to illustrate the results of the paper.

\section{Preliminaries}
\label{sec:preliminaries}

\subsection{Probability measures and metrics between measures}
Throughout this paper, $\mathcal{X}$ will be a separable Banach space (Hilbert space) equipped with a norm $\norm{\cdot}_\mathcal{X}$ (inner product $\langle \cdot \, , \,\cdot \rangle_{\mathcal{X}}$), with notice that the subscript may be ignored if no confusion arises. A bounded linear operator $\mathcal{C}: \mathcal{X} \to \mathcal{X}$ in Hilbert space $\mathcal{X}$ is called self-adjoint if $ \langle \mathcal{C}x , y \rangle = \langle x , \mathcal{C} y \rangle$ for all $x,y \in \mathcal{X}$ and positive definite (or positive) if $\langle \mathcal{C}x , x \rangle \ge 0$ for all $x\in \mathcal{X}$. We say that a self-adjoint and positive operator $\mathcal{C}$ is of trace class if
\begin{equation*}
    \tr({\mathcal{C}}) := \sum_{n = 1}^{\infty} \langle \mathcal{C} e_n, e_n\rangle < \infty,
\end{equation*}
where $\{e_n\}$ is an orthonormal basis of $\mathcal{X}$.  

Let $\mathcal{B}(\mathcal{X})$ be the Borel $\sigma-$algebra on $\mathcal{X}$ and let $\mu$ be a Borel probability measure on $\mathcal{X}$, that is, $\mu$ is defined on the measurable space $(\mathcal{X},\mathcal{B}(\mathcal{X})$. The mean $m\in \mathcal{X}$ and the (bounded linear operator) covariance $\mathcal{C}:\mathcal{X} \to \mathcal{X}$ of $\mu$ are defined as follows
\begin{equation*}
    \langle m, h \rangle = \int_{\mathcal{X}} \langle h, x \rangle \dd \mu(x), \quad \text{ for all } h \in \mathcal{X},
\end{equation*}
\begin{equation*}
    \langle \mathcal{C}h_1, h_2 \rangle = \int_{\mathcal{X}} \langle h_1, x - m \rangle \langle h_2, x - m_2 \rangle \mu(\dd x), \quad \text{ for all } h_1, h_2 \in \mathcal{X}.
\end{equation*}
We also use the concept of weighted norm $\|\cdot\|_{\mathcal{C}} = \|\mathcal{C}^{-1/2}\cdot\|$ for any covariance operator $\mathcal{C}$ in $\mathcal{X}$. 

Let $\mu_1$ and $\mu_2$ be two Borel probability measures on $\mathcal{X}$\rold{ and assume that $\mu_{2} \ll \mu_{1}$, i.e., $\mu_2$ is absolutely continuous w.r.t $\mu_1$}. Let $\mu$ be a common reference measure, also defined on $(\mathcal{X},\mathcal{B}(\mathcal{X})$\rold{, such that both $\mu_1$ and $\mu_2$ are absolutely continuous w.r.t $\mu$}. The following \rnew{``}distances\rnew{''} between measures are utilized in the rest of the paper.
 
The \emph{Hellinger distance} between $\mu_1$ and $\mu_2$ is defined as
\begin{equation*}
	d_{\mathrm{Hell}}^2\left(\mu_{1},\mu_{2}\right) =\frac{1}{2} \int_{\mathcal{X}}\left(\sqrt{\frac{\dd \mu_{1}}{\dd \mu}}-\sqrt{\frac{\dd \mu_{2}}{\dd \mu}}\right)^{2} \dd \mu
	= \frac{1}{2} \int_{\mathcal{X}}\left(1-\sqrt{\frac{\dd\mu_{2}}{\dd \mu_{1}}}\right)^{2} \dd \mu_{1},
\end{equation*}
where $\mu$ is a reference measure such that $\mu_1 \ll \mu$ and $\mu_2 \ll \mu$, i.e., $\mu_1$ and $\mu_2$ are absolutely continuous with respect to $\mu$. The second identity holds if $\mu_2 \ll \mu_1$.

The \emph{Kullback--Leibler} (KL) divergence for $\mu_1$ and $\mu_2$ with $\mu_2 \ll \mu_1$ is defined as
\begin{equation*}
    D_{\mathrm{KL}}\left(\mu_{2} \parallel \mu_{1}\right)=\int_{\mathcal{X}} \log \left(\frac{\dd \mu_{2}}{\dd \mu_{1}}\right)  \frac{\dd \mu_{2}}{\dd \mu_{1}} \dd \mu_{1}
    =\int_{\mathcal{X}} \log \left(\frac{\dd \mu_{2}}{\dd \mu_{1}}\right)  \dd \mu_{2}.
\end{equation*}
Here, the second identity holds if also $\mu_{1} \ll \mu_{2}$\rnew{ (that is, $\mu_1$ and $\mu_2$ are equivalent)}. 
Notice carefully that $D_{\mathrm{KL}}\left(\mu_{2} \parallel \mu_{1}\right) \ge 0$ and that KL divergence is not symmetric.

Let us now record three well-known lemmas that will be used below.
First, the Hellinger distance can be bounded by the Kullback--Leibler divergence as follows. 
\begin{lemma}
\label{lem:Hell-KL}
If $\mu_1$ and $\mu_2$ are equivalent probability measures on $\mathcal{X}$, then 
\begin{equation}
d^{2}_{\mathrm{Hell}}\left(\mu_{1}, \mu_{2}\right) \leq \frac{1}{2} D_{\mathrm{KL}}\left(\mu_{1} \parallel \mu_{2}\right).
\end{equation} 
\end{lemma}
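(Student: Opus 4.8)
The plan is to reduce the inequality to the elementary bound $\log t \le t-1$, valid for all $t>0$. Since $\mu_1$ and $\mu_2$ are equivalent, I may take $\mu_1$ itself as the common reference measure and work with the single Radon--Nikodym derivative $r = \dd\mu_2/\dd\mu_1$, which is strictly positive $\mu_1$-almost everywhere. First I would rewrite the Hellinger distance using the second identity in its definition,
\begin{equation*}
d^2_{\mathrm{Hell}}(\mu_1,\mu_2) = \frac{1}{2}\int_{\mathcal{X}} \lb(1-\sqrt{r}\rb)^2 \dd\mu_1 = 1 - \int_{\mathcal{X}}\sqrt{r}\,\dd\mu_1,
\end{equation*}
where the last equality uses $\int_{\mathcal{X}}\dd\mu_1 = \int_{\mathcal{X}} r\,\dd\mu_1 = 1$ because both measures are probability measures. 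Likewise, from the definition of the KL divergence together with $\dd\mu_1/\dd\mu_2 = 1/r$,
\begin{equation*}
D_{\mathrm{KL}}(\mu_1\parallel\mu_2) = \int_{\mathcal{X}} \log\lb(\frac{\dd\mu_1}{\dd\mu_2}\rb)\dd\mu_1 = -\int_{\mathcal{X}}\log r\,\dd\mu_1.
\end{equation*}

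With these two expressions in hand, the claimed inequality becomes
\begin{equation*}
\int_{\mathcal{X}}\lb(1-\sqrt{r}\rb)\dd\mu_1 \le -\frac{1}{2}\int_{\mathcal{X}}\log r\,\dd\mu_1,
\end{equation*}
so it suffices to establish the corresponding pointwise inequality between the integrands. Writing $t=\sqrt{r}>0$, and noting $-\tfrac12\log r = -\log t$, this amounts to $1-t \le -\log t$, i.e. $\log t \le t-1$, which holds for every $t>0$ with equality exactly at $t=1$ (this is the tangent-line bound following from concavity of the logarithm, or one checks directly that $g(t)=t-1-\log t$ has $g(1)=0$ and $g'(t)=1-1/t$ changing sign only at $t=1$). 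Applying this with $t=\sqrt{r(x)}$ and integrating against $\mu_1$ delivers the result.

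The computation is essentially routine once the reference measure is fixed; the only point requiring care is the bookkeeping of absolute continuity and integrability. Equivalence of $\mu_1$ and $\mu_2$ guarantees that $r$ and $1/r$ are well defined and finite $\mu_1$-almost everywhere. The left-hand integral is automatically finite, since by Cauchy--Schwarz $\int_{\mathcal{X}}\sqrt{r}\,\dd\mu_1 \le 1$, whence $d^2_{\mathrm{Hell}}\in[0,1]$. For the right-hand side, the pointwise bound $\log r \le r-1$ gives $(\log r)_+ \le r$, so $\int_{\mathcal{X}}(\log r)_+\,\dd\mu_1 \le 1$, ensuring the integral defining $D_{\mathrm{KL}}(\mu_1\parallel\mu_2)$ is well defined with value in $[0,+\infty]$. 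Thus the pointwise inequality may be integrated term by term, and if $D_{\mathrm{KL}}(\mu_1\parallel\mu_2)=+\infty$ the bound is trivially true. This verification that the manipulations are valid without presupposing finiteness of the divergence is the main (and only mild) obstacle.
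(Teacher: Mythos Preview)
Your proof is correct; the reduction to the pointwise inequality $\log t \le t-1$ with $t=\sqrt{r}$ is exactly the standard argument, and your remarks on integrability are accurate. Note that the paper does not actually prove this lemma---it is merely recorded as one of three ``well-known lemmas'' and stated without proof---so there is no paper proof to compare against; your argument supplies what the authors omitted.
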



Second, the Kullback--Leibler divergence between two Gaussian distributions has an explicit expression utilizing the means and covariance matrices.

\begin{lemma}
\label{lem:KL_Gaussian}
Suppose we have two Gaussian distributions $\mu_1 \sim {\mathcal N}(m_1, \Gamma_1)$ and $\mu_2 \sim {\mathcal N}(m_2, \Gamma_2)$ on $\R^p$. Then it holds that
\begin{equation*}
    D_{\mathrm{KL}}(\mu_1, \mu_2) = \frac 12\left(\tr(\Gamma_2^{-1} \Gamma_1) - p + (m_2 - m_1)^\top \Gamma_2^{-1} (m_2 - m_1) + \log\left(\frac{\det \Gamma_2}{\det \Gamma_1}\right)\right).
\end{equation*}
\end{lemma}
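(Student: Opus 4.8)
The plan is to evaluate the divergence directly from its definition as the expectation of the log-likelihood ratio under the first measure. Since both measures admit Lebesgue densities $p_i$ corresponding to $\mathcal N(m_i,\Gamma_i)$, I would write $D_{\mathrm{KL}}(\mu_1 \parallel \mu_2) = \E^{\mu_1}\log\lb(p_1(X)/p_2(X)\rb)$ and substitute the explicit Gaussian densities. The normalising constants contribute the term $\frac12\log(\det\Gamma_2/\det\Gamma_1)$, while the exponents produce two quadratic forms, so that the integrand reduces to
\begin{equation*}
\log\frac{p_1(x)}{p_2(x)} = \frac12 \log\frac{\det\Gamma_2}{\det\Gamma_1} - \frac12 (x-m_1)^\top \Gamma_1^{-1}(x-m_1) + \frac12 (x-m_2)^\top \Gamma_2^{-1}(x-m_2).
\end{equation*}

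The second step is to integrate term by term with respect to $\mu_1=\mathcal N(m_1,\Gamma_1)$. The first term is constant and passes through unchanged. For the two quadratic forms I would invoke the standard identity for the expectation of a quadratic form, namely $\E^{\mu_1}\lb[(X-a)^\top A (X-a)\rb] = \tr(A\Gamma_1) + (m_1-a)^\top A (m_1-a)$ for $X\sim\mathcal N(m_1,\Gamma_1)$ and symmetric $A$. Applying it with $a=m_1$, $A=\Gamma_1^{-1}$ collapses the middle term to $\tr(I_p)=p$, while applying it with $a=m_2$, $A=\Gamma_2^{-1}$ gives $\tr(\Gamma_2^{-1}\Gamma_1) + (m_1-m_2)^\top \Gamma_2^{-1}(m_1-m_2)$ for the last term. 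Collecting the three contributions and using the symmetry $(m_1-m_2)^\top \Gamma_2^{-1}(m_1-m_2) = (m_2-m_1)^\top \Gamma_2^{-1}(m_2-m_1)$ reproduces the claimed formula.

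This computation is elementary and poses no genuine obstacle; the only point requiring a little care is the cross term in the final quadratic form, where one writes $X-m_2 = (X-m_1)+(m_1-m_2)$ and observes that the linear cross term vanishes in expectation because $\E^{\mu_1}(X-m_1)=0$, leaving precisely the trace and the mean-shift contributions recorded above. One could instead simply cite this as a textbook fact, but the direct derivation is short enough to present in full.
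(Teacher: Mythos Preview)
Your derivation is correct and is the standard way to establish this identity. The paper, however, does not actually prove this lemma: it is introduced among ``three well-known lemmas'' and stated without proof as a textbook fact, so there is no argument in the paper to compare against. Your computation would serve perfectly well as the omitted proof.
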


Third, arbitrary moments of Gaussian distributions in Hilbert spaces are finite.

\begin{lemma}[{\citet[Prop. 2.19]{DZ14}}]
\label{lem:moments}
Let $\mathcal{X}$ be a separable Hilbert space.  For any $k\in \mathbb{N}$, there exists a constant $C = C(k)$ such that
    \begin{equation*}
        \int_{\mathcal{X}} \norm{x}^{2k} \mu (\dd x) \le C[\tr(\Gamma)],
    \end{equation*}
for any Gaussian measure $\mu = \mathcal{N}(0, \Gamma)$.
\end{lemma}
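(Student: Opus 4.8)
The plan is to reduce the Hilbert-space moment bound to a one-dimensional computation through the spectral decomposition of $\Gamma$, and then control the resulting random series by Minkowski's inequality. Since $\Gamma$ is self-adjoint, positive and trace class, I would first diagonalize it: there is an orthonormal basis $\{e_n\}$ of $\mathcal{X}$ and eigenvalues $\lambda_n \ge 0$ with $\sum_n \lambda_n = \tr(\Gamma) < \infty$. The Karhunen--Lo\`eve representation then writes a sample $x \sim \mu$ as $x = \sum_n \sqrt{\lambda_n}\,\xi_n\, e_n$, where $\{\xi_n\}$ are i.i.d. standard normal random variables; by Parseval's identity this gives $\norm{x}^2 = \sum_n \lambda_n \xi_n^2$, so the target quantity is the scalar moment $\E\big[ S^k \big]$ with $S := \sum_n \lambda_n \xi_n^2$.

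The key step is then to estimate $\norm{S}_{L^k}$. As every summand $\lambda_n \xi_n^2$ is nonnegative, Minkowski's inequality in $L^k$ yields $\norm{S}_{L^k} \le \sum_n \lambda_n \, \big(\E |\xi_n|^{2k}\big)^{1/k}$. Since each $\xi_n$ is standard normal, $\E|\xi_n|^{2k} = (2k-1)!!$ depends only on $k$, whence $\norm{S}_{L^k} \le \big((2k-1)!!\big)^{1/k}\, \tr(\Gamma)$. Raising to the power $k$ gives $\int_{\mathcal{X}} \norm{x}^{2k}\,\mu(\dd x) = \E[S^k] \le C(k)\,[\tr(\Gamma)]^k$ with $C(k) = (2k-1)!!$, which is the asserted moment bound together with its explicit dependence on the trace.

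The computation itself is immediate, so the only points requiring care are measure-theoretic rather than algebraic. One must justify that the Karhunen--Lo\`eve series converges in $\mathcal{X}$ almost surely and in $L^2$, which follows from $\E \sum_n \lambda_n \xi_n^2 = \tr(\Gamma) < \infty$, and that Minkowski's inequality is legitimate for the infinite sum; the latter is valid because the partial sums increase monotonically to $S$, so the finite-sum bound passes to the limit by monotone convergence. I would expect this bookkeeping of the spectral expansion to be the main (and only mild) obstacle. As an alternative route one could invoke Fernique's theorem to obtain finiteness of all polynomial moments at once, but it does not directly produce the sharp dependence on $\tr(\Gamma)$, which is why the eigenexpansion argument is preferable here.
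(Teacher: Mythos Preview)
The paper does not supply its own proof of this lemma; it merely records the statement with a citation to \cite[Prop.~2.19]{DZ14}. Consequently there is nothing to compare against, and your argument stands on its own merits.

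Your proof is correct and is essentially the standard one: diagonalize $\Gamma$, write $\norm{x}^2$ via the Karhunen--Lo\`eve expansion as $S = \sum_n \lambda_n \xi_n^2$, and bound $\norm{S}_{L^k}$ by Minkowski together with the explicit one-dimensional moment $\E|\xi|^{2k} = (2k-1)!!$. The measure-theoretic caveats you flag (a.s.\ and $L^2$ convergence of the series, passage to the limit in Minkowski via monotone convergence) are exactly the right ones and are handled correctly.

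One point worth flagging: the bound you obtain is $\int_{\mathcal{X}} \norm{x}^{2k}\,\mu(\dd x) \le C(k)\,[\tr(\Gamma)]^{k}$, with the trace raised to the $k$th power. This is the correct dependence, and it is also what the cited reference actually proves. The paper's displayed inequality reads $C[\tr(\Gamma)]$ without the exponent~$k$, which appears to be a typographical slip in the paper rather than an error on your part; indeed for $k\ge 2$ the inequality as literally written cannot hold uniformly over all covariances (scale $\Gamma \mapsto t\Gamma$ and compare powers of~$t$).
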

Below, we denote random variables with capital letters ($X$ and $Y$), while the realizations are generally denoted by lowercase letters ($x$ and $y$).

\subsection{Bayesian optimal experimental design}

In Bayesian optimal experimental design, one seeks an experimental setup providing predictive data distribution with maximal information content in terms of recovering the unknown parameter. 
Let us make this idea more precise: we denote the unknown latent parameter by $x\in \mathcal{X}$, where $\mathcal{X}$ is a separable Banach space. Let $y\in \mathcal{Y}$ be the observational data variable   where $\mathcal{Y}$ is a finite-dimensional data space. For convenience, in this work we assume $\mathcal{Y} = \R^p$. Moreover, let $d \in \mathcal{D}$ be the design variable, where $\mathcal{D}$ is a (typically compact) metric space. 

The expected utility is given by formula \eqref{eq:main_utility},
where $\mu$ is the joint probability distribution of $X$ and $Y$, and $u$ is a utility function. In this work, we focus on the \emph{expected information gain} by defining
\begin{equation*}
	u(x,y,d) = -\log\left(\frac{\pi(y | x; d)}{\pi(y ; d)}\right),
\end{equation*}
where $\pi(y | x; d)$ and $\pi(y ; d)$ are the likelihood distribution and the evidence distribution given $d\in {\mathcal D}$, respectively. We observe that
\begin{equation}
	\label{eq:expected_util}
        U(d) = \int_{\mathcal{Y}}\int_{\mathcal{X}}\log \left(\dfrac{\pi(y\vert x;d)}{\pi(y;d)}\right) \pi(y\vert x;d) \dd  \mu_{0}(x) \dd y = \int_{{\mathcal X}} D_{KL}(\pi(\cdot | x; d), \pi(\cdot; d)) \mu_0(dx),
\end{equation}
where $\mu_0$ is the prior measure on $\mathcal{X}$.
To find the optimal experimental design, the expected utility is then maximized over the design variable space $\mathcal{D}$. A design $d^*$ is called optimal if it maximizes $U$, that is
\begin{equation} \label{in:d_optimal}
   d^* \in \argmax_{d\in \mathcal{D}} U(d). 
\end{equation}
We note that in general, the functional $U: \mathcal{D} \to \mathbb{R}$ may have several \rchange{maximisers}{maximizers}.


In inverse problems, the unknown $x$ is connected to the data $y$
through an observation (or parameter-to-observable) map $\mathcal{G}: \mathcal{X} \times {\mathcal D}\to \mathcal{Y}$.
The problem of inverting ${\mathcal G}(\cdot; d)$ with fixed $d\in {\mathcal D}$ is ill-posed and the likelihood distribution is governed by the observational model 
\begin{equation*}
    y = \mathcal{G}(x; d) + \xi,
\end{equation*}
where $\xi$ represents an additive measurement noise.
As a typical example, a Gaussian distribution noise distribution, $\xi \sim \mathcal{N}(0,\Gamma)$, with the covariance matrix $\Gamma \in \mathbb{R}^{p \times p}$ giving rise to the likelihood distribution $y|x \sim \mathcal{N}(\mathcal{G}(x),\Gamma)$
%

\subsection{$\Gamma$-convergence}
We collect here the definition and some basic results of $\Gamma$-convergence that will be used later on. Standard references of this subject are \cite{braides2002gamma, dal1993introduction}. 
\begin{definition} 
\label{def:Gamma_convergence}
    Let $\mathcal{X}$ be a metric space and assume that $F_n, F: \mathcal{X} \to \mathbb{R}$ are functionals on $\mathcal{X}$. We say that $F_n$ $\Gamma$-converges to $F$ if, for every $x\in \mathcal{X}$, the following conditions hold,
    \begin{itemize}
        \item [(i)] (liminf inequality) for every sequence $(x_n)_{n\in \mathbb{N}} \subset \mathcal{X}$ converging to $x$,
        \begin{equation*}
            F(x) \le \liminf_{n\to \infty} F_n(x_n);
        \end{equation*}
        \item [(ii)] (limsup inequality) there exists a recovery sequence $(x_n)_{n\in \mathbb{N}} \subset \mathcal{X}$ converging to $x$ such that
        \begin{equation*}
            F(x) \ge \limsup_{n\to \infty} F_n(x_n).
        \end{equation*}
    \end{itemize}
\end{definition}

\begin{theorem}[Fundamental Theorem of $\Gamma$-convergence]
\label{thm:Gamma_convergence}
    If $F_n$ $\Gamma$-converges to $F$ and $x_n$ minimizes $F_n$, then every limit point $x$ of the sequence $(x_n)_{n\in\mathbb{N}}$ is a minimizer of $F$, and
    \begin{equation*}
        F(x) = \limsup_{n\to\infty} F_n(x_n).
    \end{equation*}
\end{theorem}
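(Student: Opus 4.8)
The plan is to fix a limit point $x$ of $(x_n)$, extract a subsequence $x_{n_k}\to x$, and then play the two inequalities of Definition \ref{def:Gamma_convergence} against the minimality of each $x_n$. Everything reduces to transferring the $\liminf$ inequality to subsequences and combining it with a recovery sequence supplied by the $\limsup$ inequality.

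The one genuinely technical point, which I would dispatch first, is that inequality (i) of Definition \ref{def:Gamma_convergence} is phrased for \emph{full} sequences, whereas the limit point comes with a subsequence $x_{n_k}\to x$. I would upgrade (i) to the claim $F(x)\le \liminf_{k\to\infty} F_{n_k}(x_{n_k})$ by a fill-in argument: setting $L:=\liminf_k F_{n_k}(x_{n_k})$ (assuming $L<\infty$, else nothing is to prove), pass to a further subsequence realizing $L$, and build a full sequence $(w_n)$ that equals $x_{n_k}$ along those indices and equals the constant $x$ otherwise. Then $w_n\to x$, so (i) gives $F(x)\le \liminf_n F_n(w_n)$; since the value $L$ is attained along the chosen indices, $L$ is a subsequential limit of $(F_n(w_n))_n$ and hence $\liminf_n F_n(w_n)\le L$, yielding the desired subsequence $\liminf$ inequality.

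With that in hand, showing $x$ minimizes $F$ is routine. For an arbitrary competitor $z\in\mathcal{X}$, invoke the recovery sequence of (ii): a full sequence $z_n\to z$ with $\limsup_n F_n(z_n)\le F(z)$. Minimality of $x_{n_k}$ gives the termwise bound $F_{n_k}(x_{n_k})=\min F_{n_k}\le F_{n_k}(z_{n_k})$, so chaining yields $F(x)\le \liminf_k F_{n_k}(x_{n_k})\le \liminf_k F_{n_k}(z_{n_k})\le \limsup_k F_{n_k}(z_{n_k})\le \limsup_n F_n(z_n)\le F(z)$, where the penultimate step uses that a subsequence $\limsup$ is bounded by the full $\limsup$. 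Since $z$ is arbitrary, $x$ is a minimizer of $F$.

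Finally, for the identity $F(x)=\limsup_n F_n(x_n)$ I would specialize the previous step to $z=x$: its recovery sequence $\tilde x_n\to x$ satisfies $\limsup_n F_n(\tilde x_n)\le F(x)$, and minimality $F_n(x_n)\le F_n(\tilde x_n)$ then gives $\limsup_n F_n(x_n)\le F(x)$. Conversely, the subsequence $\liminf$ inequality together with $F(x)\le \liminf_k F_{n_k}(x_{n_k})\le \limsup_k F_{n_k}(x_{n_k})\le \limsup_n F_n(x_n)$ gives the reverse bound, closing the equality. The main obstacle throughout is not any hard estimate but the careful bookkeeping of subsequences, and in particular the fill-in device needed to apply inequality (i) along the subsequence that defines the limit point.
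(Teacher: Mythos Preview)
Your proof is correct and follows the standard textbook argument. Note, however, that the paper does not supply its own proof of this theorem: it is stated in Section~\ref{sec:preliminaries} as background material, with references to \cite{braides2002gamma, dal1993introduction}. Your fill-in device for passing the $\liminf$ inequality to subsequences is sound, and the chaining of inequalities via the recovery sequence is exactly the classical route; indeed, one often bypasses the fill-in step by quoting that every subsequence of a $\Gamma$-converging family also $\Gamma$-converges to the same limit, which is itself proved by precisely the construction you wrote out.
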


\section{Stability estimates}
\label{sec:stability}
\subsection{General assumptions and main results}

In this section, we establish two useful results on the stability of the expected utility and of the optimal design variable. 
Recall now the expected information gain $U(d)$ defined in \eqref{eq:expected_util}.
Assume that we have access to a surrogate likelihood density $\pi_N(y | x; d)$, which approximates $\pi(y | x; d)$ as $N$ increases. The corresponding surrogate utility $U_N(d)$ is obtained as
\begin{equation}
	\label{eq:surrogate_util}
	U_{N}(d) = \int_{\mathcal{Y}}\int_{\mathcal{X}}\log \left(\dfrac{\pi_{N}(y|x;d)}{\pi_{N}(y;d)}\right) \pi_{N}(y|x;d) \dd \mu_{0}(x) \dd y,
\end{equation}
where $\pi_{N}(y;d)$ is the corresponding surrogate evidence. Let us make our approximation condition precise by the following assumption.
\begin{assumption}
\label{ass:1}
The following conditions hold.
\begin{itemize}
\item[\rm (A1)] There exist $C>0$, a function $\psi: \mathbb{N} \to \mathbb{R}_+$ such that $\psi(N) \to 0$ as $N\to \infty$ and
\begin{equation*}
\E^{\mu_0} \lb [D_{\mathrm{KL}} \lb( \pi_N(\cdot | X;d) \parallel \pi(\cdot | X;d) \rb) \rb] \leq C \psi(N),
\end{equation*}
for all $d\in {\mathcal D}$.
\item[\rm (A2)] For any converging sequence $d_N\to d$ in $\mathcal{D}$, we have
\begin{equation*}
    \lim_{N\to\infty} \E^{\mu_0} \lb [D_{\mathrm{KL}} \lb( \pi(\cdot | X;d_N) \parallel \pi(\cdot | X;d) \rb) \rb] = 0.
\end{equation*}

\item[\rm (A3)] There exists $C_0 > 0$ such that for every $N \in \mathbb{N}$, $d\in \mathcal{D}$ and every sequence $d_N \to d$ in $\mathcal{D}$,
\begin{equation} \label{eq:assA3}
\int_{\mathcal{Y}}\int_{\mathcal{X}} \log^2 \left(\frac{\pi(y|x;d)}{\pi(y;d)}\right) \left[\pi(y|x;d) +\pi_N(y|x;d) + \pi(y|x;d_N)\right]\, \dd \mu_0(x) \dd y < C_0.
\end{equation}
\end{itemize} 
\end{assumption}

\begin{remark}
    Assumption (A1) reflects a natural condition on the approximation rate of the surrogate likelihood under the Kullback--Leibler divergence. This is somewhat similar to the conditions commonly used in Bayesian inverse problems, albeit under different metrics (for instance, Hellinger distance, see \citet[\rnew{Section 4, }\rchange{a}{A}ssumptions 2]{DS17}). Assumption (A2) records a continuity condition of the likelihood with respect to the design variable. (A3) is a technical assumption which, in loose terms, requires that a special divergence between likelihood and data-marginal (or equivalently, between posterior and prior) is bounded when averaged over prior. We note that the second moment of the log-ratio quantity such as $\E^{\pi(\cdot|x;d)} \lb[\log^2 \left(\frac{\pi(Y|x;d)}{\pi(Y;d)}\right) \rb]$ in \eqref{eq:assA3} appears quite naturally in the context of Bayesian statistical inverse problems, see for instance \cite{nickl2022bayesian} (Proposition 1.3.1, where it is termed the $V$-distance). 
\end{remark}

The following proposition is the cornerstone of our main theorems. It yields in particular that the difference between the expected utility and its surrogate can be controlled by the likelihood perturbations under the Kullback--Leibler divergence. 

\begin{proposition}
\label{prop:main}
Consider likelihood distributions $\pi(y|x)$ and $\tilde \pi(y|x)$ and define
\begin{equation*}
	U = \int_{\mathcal{Y}}\int_{\mathcal{X}}\log \left(\dfrac{\pi(y | x)}{ \pi(y)}\right)  \pi(y|x)\dd \mu_{0}(x) \dd y
	\quad \text{and} \quad
	\widetilde{U} = \int_{\mathcal{Y}}\int_{\mathcal{X}}\log \left(\dfrac{\tilde \pi(y | x)}{\tilde \pi(y)}\right) \tilde \pi(y|x)\dd \mu_{0}(x) \dd y,
\end{equation*}
where $\pi(y) = \int \pi(y | x) \dd \mu_0 (x)$ and $\tilde \pi(y) = \int \tilde \pi(y | x) \dd \mu_0 (x)$. Let us denote
\begin{equation*}
K:= \int_{\mathcal{Y}}\int_{\mathcal{X}} \log^2 \left(\frac{\pi(y|x)}{\pi(y)}\right) \left[\pi(y|x) +\tilde \pi(y|x)\right]\, \dd \mu_0(x) \dd y.
\end{equation*}
It follows that
\begin{equation}
    |U - \widetilde U| \leq \sqrt{K}\sqrt{\E^{\mu_0} D_{\mathrm{KL}}(\pi(\cdot| X) \parallel \tilde \pi(\cdot| X))} + 2 \E^{\mu_0} D_{\mathrm{KL}}(\pi(\cdot| X) \parallel \tilde \pi(\cdot| X)). 
\end{equation}
\end{proposition}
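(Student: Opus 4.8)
The plan is to write the difference $U-\widetilde U$ as the sum of a \emph{linear} perturbation term and a \emph{log-ratio} term, and then estimate each separately. Concretely, by adding and subtracting the mixed integral $\int_{\mathcal Y}\int_{\mathcal X}\log(\pi(y|x)/\pi(y))\,\tilde\pi(y|x)\,\dd\mu_0(x)\,\dd y$, I would decompose
\[
U-\widetilde U
=\underbrace{\int_{\mathcal Y}\int_{\mathcal X}\log\tfrac{\pi(y|x)}{\pi(y)}\big(\pi(y|x)-\tilde\pi(y|x)\big)\,\dd\mu_0(x)\,\dd y}_{=:T_1}
+\underbrace{\int_{\mathcal Y}\int_{\mathcal X}\Big(\log\tfrac{\pi(y|x)}{\pi(y)}-\log\tfrac{\tilde\pi(y|x)}{\tilde\pi(y)}\Big)\tilde\pi(y|x)\,\dd\mu_0(x)\,\dd y}_{=:T_2}.
\]
The reason for this particular split is that the weight appearing in $T_1$ is exactly $\log(\pi(y|x)/\pi(y))$, which is the quantity entering the definition of $K$.

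For $T_1$ I would factor the signed density as $\pi-\tilde\pi=(\sqrt\pi-\sqrt{\tilde\pi})(\sqrt\pi+\sqrt{\tilde\pi})$ and apply the Cauchy--Schwarz inequality with respect to $\dd\mu_0\,\dd y$, separating $\log(\pi(y|x)/\pi(y))(\sqrt\pi+\sqrt{\tilde\pi})$ from $(\sqrt\pi-\sqrt{\tilde\pi})$. The elementary bound $(\sqrt a+\sqrt b)^2\le 2(a+b)$ turns the first factor into a constant multiple of $\sqrt{K}$. For the second factor I would recognize $\int_{\mathcal Y}(\sqrt{\pi(y|x)}-\sqrt{\tilde\pi(y|x)})^2\,\dd y=2\,d_{\mathrm{Hell}}^2(\pi(\cdot|x),\tilde\pi(\cdot|x))$ and invoke Lemma \ref{lem:Hell-KL} together with the symmetry of the Hellinger distance, so that, after integrating over the prior and using Fubini, this is bounded by $\E^{\mu_0}D_{\mathrm{KL}}(\pi(\cdot|X)\parallel\tilde\pi(\cdot|X))$. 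This reproduces the first term $\sqrt{K}\,\sqrt{\E^{\mu_0}D_{\mathrm{KL}}(\pi(\cdot|X)\parallel\tilde\pi(\cdot|X))}$ on the right-hand side (up to the universal constant generated by $(\sqrt a+\sqrt b)^2\le 2(a+b)$).

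For $T_2$ I would carry out the inner integrals explicitly. Writing $\log(\pi(y|x)/\pi(y))-\log(\tilde\pi(y|x)/\tilde\pi(y))=\log(\pi(y|x)/\tilde\pi(y|x))+\log(\tilde\pi(y)/\pi(y))$, integrating the first summand against $\tilde\pi(y|x)\,\dd y$ produces a conditional KL divergence, while marginalizing $\tilde\pi(y|x)$ over the prior in the second summand turns it into the KL divergence between the two evidences. Thus $T_2$ equals the evidence-level divergence minus the prior-averaged conditional divergence, and the chain rule for the KL divergence (equivalently, monotonicity under the marginalization $(x,y)\mapsto y$) shows that this difference has a definite sign and is controlled in absolute value by the prior-averaged conditional divergence. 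This is the step that contributes the linear (in $\E^{\mu_0}D_{\mathrm{KL}}$) term and the constant $2$ in the statement.

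The step I expect to be the main obstacle is precisely this Kullback--Leibler bookkeeping. As written, $T_2$ naturally produces the conditional divergence in the order $D_{\mathrm{KL}}(\tilde\pi\parallel\pi)$, whereas the statement is phrased with $D_{\mathrm{KL}}(\pi\parallel\tilde\pi)$; one must control $T_2$ in the stated direction while \emph{simultaneously} keeping the weight in $T_1$ equal to $\log(\pi(y|x)/\pi(y))$ (so that $K$ appears, rather than its $\tilde\pi$-analogue). Reconciling these two requirements is the delicate point: the symmetry of the Hellinger distance in Lemma \ref{lem:Hell-KL} is what frees up the direction of the divergence in the $T_1$ estimate, and the exact cancellations in $T_2$ together with the chain-rule inequality are what must be exploited to land in the correct direction. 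The remaining ingredients (Cauchy--Schwarz, the inequality for $(\sqrt a+\sqrt b)^2$, and Fubini) are routine and I would defer their details.
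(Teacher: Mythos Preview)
Your approach is essentially identical to the paper's: the decomposition $T_1+T_2$ is exactly the paper's split (the paper calls $T_1$ by $I$ and writes $T_2$ directly as $D_{\mathrm{KL}}(\tilde\pi(\cdot)\parallel\pi(\cdot))-\E^{\mu_0}D_{\mathrm{KL}}(\tilde\pi(\cdot|X)\parallel\pi(\cdot|X))$), and both $T_1$ and $T_2$ are handled precisely as you describe---Cauchy--Schwarz with the factorisation $\pi-\tilde\pi=(\sqrt\pi-\sqrt{\tilde\pi})(\sqrt\pi+\sqrt{\tilde\pi})$ followed by the Hellinger--KL bound for $T_1$, and the chain rule (data-processing) inequality for $T_2$.

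The ``main obstacle'' you flag is not an obstacle at all but a misprint in the displayed inequality of the proposition. The paper's own proof, exactly like yours, lands on $\E^{\mu_0}D_{\mathrm{KL}}(\tilde\pi(\cdot|X)\parallel\pi(\cdot|X))$ rather than $\E^{\mu_0}D_{\mathrm{KL}}(\pi(\cdot|X)\parallel\tilde\pi(\cdot|X))$: see Lemmas~\ref{lem:Iestimate} and~\ref{lem:convKL}. This is also the direction used in Assumption~(A1) and in the proof of Theorem~\ref{thm:main1}, where the proposition is applied with $\tilde\pi=\pi_N$ and the bound is written as $\sqrt{K_1}\sqrt{\E^{\mu_0}D_{\mathrm{KL}}(\pi_N(\cdot|X;d)\parallel\pi(\cdot|X;d))}+2\,\E^{\mu_0}D_{\mathrm{KL}}(\pi_N(\cdot|X;d)\parallel\pi(\cdot|X;d))$. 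So there is nothing to reconcile: just carry out the argument you outlined and state the conclusion with the KL arguments in the order $\tilde\pi\parallel\pi$.
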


We now state the main theorems of the paper.
\begin{theorem}
\label{thm:main1}
Let Assumption \ref{ass:1} (A1) and (A3) hold. Then there exists $C>0$ such that for all $N$ sufficiently large,
\begin{equation}
    \sup_{d\in \mathcal{D}} |U(d) - U_N(d)| \le C\sqrt{\psi(N)}.
\end{equation}
\end{theorem}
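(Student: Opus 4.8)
The plan is to obtain the theorem as an essentially immediate consequence of Proposition \ref{prop:main}, applied separately for each design $d$, and then to promote the resulting pointwise estimate to a uniform one by exploiting that the constants furnished by (A1) and (A3) do not depend on $d$.

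First I would fix $d \in \mathcal{D}$ and invoke Proposition \ref{prop:main} with $\pi(y|x) := \pi(y|x;d)$ the true likelihood and $\tilde\pi(y|x) := \pi_N(y|x;d)$ the surrogate, so that $U = U(d)$ and $\widetilde U = U_N(d)$. Under this identification the quantity $K$ of the proposition equals
\begin{equation*}
K(d) = \int_{\mathcal{Y}}\int_{\mathcal{X}} \log^2\left(\frac{\pi(y|x;d)}{\pi(y;d)}\right)\bigl[\pi(y|x;d) + \pi_N(y|x;d)\bigr]\,\dd\mu_0(x)\,\dd y,
\end{equation*}
which consists of exactly the first two of the three terms appearing in the integrand of (A3). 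Dropping the nonnegative third term $\pi(y|x;d_N)$, (A3) gives $K(d) \le C_0$ with $C_0$ independent of both $N$ and $d$. The proposition then reads
\begin{equation*}
|U(d) - U_N(d)| \le \sqrt{C_0}\,\sqrt{E_N(d)} + 2\,E_N(d), \qquad E_N(d) := \E^{\mu_0} D_{\mathrm{KL}}\bigl(\pi(\cdot|X;d)\parallel \pi_N(\cdot|X;d)\bigr).
\end{equation*}

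The second step is to estimate $E_N(d)$ via (A1). A small subtlety arises here: (A1) controls the reversed divergence $\E^{\mu_0}D_{\mathrm{KL}}(\pi_N(\cdot|X;d)\parallel\pi(\cdot|X;d))$, whereas $E_N(d)$ carries the opposite ordering. I would resolve this by inspecting the proof of Proposition \ref{prop:main}: the estimate there is ultimately driven by the \emph{symmetric} Hellinger distance between $\pi(\cdot|x;d)$ and $\pi_N(\cdot|x;d)$, which by Lemma \ref{lem:Hell-KL} is dominated by \emph{either} orientation of the KL divergence. Consequently (A1) still yields $E_N(d) \le C_1\psi(N)$ with $C_1$ uniform in $d$, and substituting gives
\begin{equation*}
|U(d) - U_N(d)| \le \sqrt{C_0 C_1}\,\sqrt{\psi(N)} + 2 C_1 \psi(N).
\end{equation*}
Since $\psi(N)\to 0$, for all $N$ large enough we have $\psi(N)\le 1$ and hence $\psi(N)\le\sqrt{\psi(N)}$, so the higher-order second term is absorbed into the first. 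The right-hand side is therefore bounded by $(\sqrt{C_0 C_1}+2C_1)\sqrt{\psi(N)}$, and as this constant is independent of $d$, taking the supremum over $\mathcal{D}$ completes the proof with $C = \sqrt{C_0 C_1}+2C_1$.

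The argument is largely bookkeeping once Proposition \ref{prop:main} is in hand, and the one structurally important point is that uniformity in $d$ comes for free: because (A1) and (A3) are postulated with $d$-independent constants, the pointwise bound is automatically uniform and no compactness or equicontinuity argument over $\mathcal{D}$ is needed. The main thing to get right is the orientation of the KL divergence discussed above; I would also note that (A2) and the third weight $\pi(y|x;d_N)$ in (A3) play no role in this estimate, being reserved instead for the stability of maximizers in Theorem \ref{thm:main2}.
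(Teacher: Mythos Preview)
Your argument is correct and matches the paper's: apply Proposition~\ref{prop:main} at each $d$ with $\tilde\pi=\pi_N$, bound $K$ via (A3), control the KL term via (A1), absorb the linear term into the square-root term for large $N$, and take the supremum.

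On the orientation issue you flag: your sentence ``(A1) still yields $E_N(d)\le C_1\psi(N)$'' is not literally right, since you defined $E_N(d)=\E^{\mu_0}D_{\mathrm{KL}}(\pi\parallel\pi_N)$ while (A1) bounds the reversed divergence, and the Hellinger symmetry argument you give only covers the $\sqrt{E_N}$ contribution from the term $I$, not the additive $2E_N$ term. The cleaner resolution---and this is exactly what the paper does---is to note that the \emph{proof} of Proposition~\ref{prop:main} (via Lemmas~\ref{lem:Iestimate} and~\ref{lem:convKL}) actually delivers the bound with $D_{\mathrm{KL}}(\tilde\pi\parallel\pi)$ throughout; with $\tilde\pi=\pi_N$ the quantity appearing is then $\E^{\mu_0}D_{\mathrm{KL}}(\pi_N\parallel\pi)$, which (A1) controls directly. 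The orientation in the displayed conclusion of Proposition~\ref{prop:main} is a misprint, and the paper's own proof of Theorem~\ref{thm:main1} silently uses the corrected version.
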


\begin{theorem}
\label{thm:main2}
Let Assumption \ref{ass:1} hold. Suppose
\begin{equation*}
   d^*_N \in \argmax_{d\in \mathcal{D}} U_N(d). 
\end{equation*}
Then, the limit $d^*$ of any converging subsequence of $\{d^*_N\}_{N=1}^\infty$ is a maximizer of $U$, that is, $d^* \in \argmax_{d\in \mathcal{D}} U(d)$. Moreover,
\begin{equation}\label{eq:exp_arg}
    \liminf_{N\to \infty} U_N(d^*_N) = U(d^*).
\end{equation}
In particular, if $\{d^*_N\}_{N=1}^\infty$ converges to $d^*$ in $\mathcal{D}$, then $d^*$ is a maximizer of $U$, and 
\begin{equation}\label{eq:maxvalue}
    \lim_{N\to \infty} U_N(d^*_N) = U(d^*).
\end{equation}
\end{theorem}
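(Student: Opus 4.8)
The plan is to deduce the statement from the uniform estimate of Theorem \ref{thm:main1} together with the continuity of the limit functional $U$, and then to invoke the Fundamental Theorem of $\Gamma$-convergence (Theorem \ref{thm:Gamma_convergence}) applied to the functionals $-U_N$. To begin, I note that Theorem \ref{thm:main1} is available since Assumption \ref{ass:1} holds in full; writing $\epsilon_N := \sup_{d\in\mathcal{D}}|U(d)-U_N(d)|$, it gives $\epsilon_N \le C\sqrt{\psi(N)}\to 0$, so that $U_N\to U$ uniformly on $\mathcal{D}$.

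The first genuine step is to show that $U:\mathcal{D}\to\R$ is continuous. I would fix $d\in\mathcal{D}$ and a sequence $d_N\to d$ and apply Proposition \ref{prop:main} to the pair of likelihoods $\pi(\cdot\,|\,x;d)$ and $\pi(\cdot\,|\,x;d_N)$, so that the roles of $U$ and $\widetilde U$ are played by $U(d)$ and $U(d_N)$. With this choice the factor $K$ is precisely $\int_{\mathcal Y}\int_{\mathcal X}\log^2(\pi(y|x;d)/\pi(y;d))[\pi(y|x;d)+\pi(y|x;d_N)]\,\dd\mu_0(x)\dd y$, which (A3) bounds by $C_0$, while the averaged divergence $\E^{\mu_0}D_{\mathrm{KL}}(\pi(\cdot|X;d)\parallel\pi(\cdot|X;d_N))$ is driven to $0$ by the continuity condition (A2). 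Proposition \ref{prop:main} would then give $|U(d)-U(d_N)|\le\sqrt{C_0}\,\sqrt{o(1)}+2\,o(1)\to 0$, establishing continuity of $U$.

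Granting $U_N\to U$ uniformly and $U$ continuous, I would next verify that $-U_N$ $\Gamma$-converges to $-U$ in the sense of Definition \ref{def:Gamma_convergence}. For the liminf inequality, if $d_N\to d$ then $-U_N(d_N)\ge -U(d_N)-\epsilon_N$, whence $\liminf_N(-U_N(d_N))\ge -U(d)$ by continuity; for the limsup inequality the constant recovery sequence $d_N\equiv d$ suffices, since $-U_N(d)\to -U(d)$. As the minimizers of $-U_N$ are exactly the maximizers $d^*_N$ of $U_N$, Theorem \ref{thm:Gamma_convergence} would yield that every limit point $d^*$ of $\{d^*_N\}$ minimizes $-U$, i.e. maximizes $U$, and that $-U(d^*)=\limsup_N(-U_N(d^*_N))$, which is \eqref{eq:exp_arg}. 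For the optimal values I would argue directly: since $U_N(d^*_N)=\max_d U_N(d)$ and $|\max_d U_N-\max_d U|\le\epsilon_N\to 0$, the full sequence converges, $U_N(d^*_N)\to\max_d U(d)=U(d^*)$; and if $d^*_N\to d^*$ then continuity gives $U(d^*_N)\to U(d^*)$, yielding \eqref{eq:maxvalue}.

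I expect the continuity of $U$ to be the crux, the uniform estimate and the $\Gamma$-convergence bookkeeping being routine once it is in hand. The delicate point is to reconcile the order of the arguments of the Kullback--Leibler divergence: (A3) fixes the reference ratio at the design $d$, whereas (A2) controls the divergence $D_{\mathrm{KL}}(\pi(\cdot|X;d_N)\parallel\pi(\cdot|X;d))$, and one must choose the reference likelihood in Proposition \ref{prop:main} so that the $K$-term remains uniformly bounded along $d_N\to d$ while the averaged divergence still vanishes. Verifying this compatibility is the step I would treat most carefully.
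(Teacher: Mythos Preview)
Your proposal is correct and follows essentially the same route as the paper: establish continuity of $U$ via Proposition~\ref{prop:main} with (A2) and (A3), combine with the uniform bound from Theorem~\ref{thm:main1}, and invoke the Fundamental Theorem of $\Gamma$-convergence for $-U_N$. The paper phrases the first two steps jointly as \emph{continuous convergence} $U_N(d_N)\to U(d)$ for every $d_N\to d$, but the content is identical.

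Your flagged concern about the order of arguments in the KL divergence is not an obstruction. With the substitution $\tilde\pi=\pi(\cdot\,|\,x;d_N)$ and $\pi=\pi(\cdot\,|\,x;d)$, the proof of Proposition~\ref{prop:main} (via Lemmas~\ref{lem:Iestimate} and~\ref{lem:convKL}) bounds $|U(d)-U(d_N)|$ by $\sqrt{K_2}\sqrt{\E^{\mu_0}D_{\mathrm{KL}}(\pi(\cdot|X;d_N)\parallel\pi(\cdot|X;d))}+2\,\E^{\mu_0}D_{\mathrm{KL}}(\pi(\cdot|X;d_N)\parallel\pi(\cdot|X;d))$, with $K_2$ exactly the integral against $\pi(y|x;d)+\pi(y|x;d_N)$ that (A3) controls; this is precisely the divergence order appearing in (A2), so both hypotheses apply directly. (The statement of Proposition~\ref{prop:main} in the paper writes the arguments in the opposite order, but its proof delivers the order you need.)
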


\begin{remark}
    Theorem \ref{thm:main1} establishes the uniform convergence of the approximate expected utility. Theorem \ref{thm:main2}, moreover, ensures that the corresponding approximate optimal design and the maximum expected information gain also converge up to a subsequence.
\end{remark}

\subsection{Proof of the main theorems}
\subsubsection{Proof of Proposition \ref{prop:main}}
Let us recall for the reader's convenience that
\begin{equation*}
	U = \int_{\mathcal{Y}}\int_{\mathcal{X}}\log \left(\dfrac{\pi(y | x)}{ \pi(y)}\right)  \pi(y|x)\dd \mu_{0}(x) \dd y
	\;\; \text{and} \;\;
	\widetilde{U} = \int_{\mathcal{Y}}\int_{\mathcal{X}}\log \left(\dfrac{\tilde \pi(y | x)}{\tilde \pi(y)}\right) \tilde \pi(y|x)\dd \mu_{0}(x) \dd y,
\end{equation*}
where
\begin{equation*}
    \pi(y) = \int \pi(y | x) \dd \mu_0 (x) \quad \text{and} \quad \tilde \pi(y) = \int \tilde \pi(y | x) \dd \mu_0 (x).
\end{equation*}
The corresponding posteriors are given by
\begin{equation} \label{eq:Bayes2}
    \frac{\dd \mu^y}{\dd \mu_0} (x) = \frac{\pi(y|x)}{\pi(y)}, \quad \frac{\dd \tilde\mu^y}{\dd \mu_0} (x) = \frac{\tilde\pi(y|x)}{\tilde\pi(y)}.
\end{equation}
We have
\begin{eqnarray*}
    U - \widetilde U & = &  \int_{\mathcal{Y}}\int_{\mathcal{X}} \left[\log \left(\dfrac{\pi(y\vert x)}{\pi(y)}\right) \pi(y\vert x) - \log\left(\frac{\tilde \pi(y | x)}{\tilde \pi(y)}\right) \tilde \pi(y | x)\right]\,\dd \mu_{0}(x) \dd y \\
        & = & \int_{\mathcal{Y}}\int_{\mathcal{X}} \log\left(\frac{\pi(y|x)}{\pi(y)}\right) \left[\pi(y|x) - \tilde \pi(y|x)\right]\, \dd \mu_{0}(x) \dd y \\
	& & + \int_{\mathcal{Y}} \log\left(\frac{\tilde \pi(y)}{\pi(y)}\right)\int_{\mathcal{X}} \tilde \pi(y|x)\, \dd \mu_{0}(x) \dd y \\ 
	& & +\int_{\mathcal{X}} \int_{\mathcal{Y}} \log\left(\frac{\pi(y|x)}{\tilde \pi(y|x)}\right) \tilde \pi(y|x) \, \dd \mu_{0}(x) \dd y \\ 
        & = & I + D_{\mathrm{KL}}(\tilde \pi(\cdot) \parallel \pi(\cdot)) - \E^{\mu_{0}} D_{\mathrm{KL}}(\tilde \pi(\cdot|X) \parallel \pi(\cdot|X)),
\end{eqnarray*}
where 
\begin{equation*}
I := \int_{\mathcal{Y}}\int_{\mathcal{X}} \log\left(\frac{\pi(y|x)}{\pi(y)}\right) \left[\pi(y|x) - \tilde \pi(y|x)\right]\, \dd \mu_{0}(x) \dd y.
\end{equation*}
Since we have the identity above, we naturally have
\begin{equation}
\label{eq:first_est}
|U - \widetilde U| \leq |I| + |D_{\mathrm{KL}}\lb(\tilde \pi(\cdot) \parallel \pi(\cdot)\rb)| + |\E^{\mu_{0}} D_{\mathrm{KL}}(\tilde \pi(\cdot|X) \parallel \pi(\cdot|X))|.
\end{equation}
Let us first consider the term $I$.
	
\begin{lemma} 
\label{lem:Iestimate}
It follows that
\begin{equation*}
|I|^2 \leq K \E^{\mu_{0}} \left[D_{\mathrm{KL}} (\tilde \pi(\cdot|X) \parallel \pi(\cdot|X)) \right],
\end{equation*}
where 
\begin{equation*}
K = \int_{\mathcal{Y}}\int_{\mathcal{X}}  \log^2 \left(\frac{\pi(y|x)}{\pi(y)}\right) \left[\pi(y|x) + \tilde \pi(y|x)\right]\, \dd \mu_{0}(x) \dd y.
\end{equation*}
\end{lemma}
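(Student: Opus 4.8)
The plan is to estimate $I$ by the Cauchy--Schwarz inequality after factoring the likelihood difference through the difference of square roots, and then to convert the resulting Hellinger-type quantity into a Kullback--Leibler divergence by means of Lemma~\ref{lem:Hell-KL}. Abbreviate $f(x,y) := \log\left(\pi(y|x)/\pi(y)\right)$, so that $I = \int_{\mathcal{Y}}\int_{\mathcal{X}} f\,[\pi(y|x) - \tilde\pi(y|x)]\,\dd\mu_0(x)\,\dd y$. The key algebraic observation, writing $\pi$ and $\tilde\pi$ for $\pi(y|x)$ and $\tilde\pi(y|x)$, is the pointwise identity $\pi - \tilde\pi = \left(\sqrt{\pi} - \sqrt{\tilde\pi}\right)\left(\sqrt{\pi} + \sqrt{\tilde\pi}\right)$, which isolates the ``distance'' factor $\sqrt{\pi} - \sqrt{\tilde\pi}$ that will be tied to the Hellinger distance.

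First I would regard the integrand as the product of $f\left(\sqrt{\pi} + \sqrt{\tilde\pi}\right)$ and $\sqrt{\pi} - \sqrt{\tilde\pi}$ and apply Cauchy--Schwarz in $L^2\!\left(\dd\mu_0(x)\,\dd y\right)$, yielding
\[
|I|^2 \le \left(\int_{\mathcal{Y}}\int_{\mathcal{X}} f^2\left(\sqrt{\pi} + \sqrt{\tilde\pi}\right)^2 \dd\mu_0\,\dd y\right)\left(\int_{\mathcal{Y}}\int_{\mathcal{X}}\left(\sqrt{\pi} - \sqrt{\tilde\pi}\right)^2 \dd\mu_0\,\dd y\right).
\]
For the first factor, the elementary bound $\left(\sqrt{a} + \sqrt{b}\right)^2 \le 2(a+b)$ gives control by $\int_{\mathcal{Y}}\int_{\mathcal{X}} f^2(\pi + \tilde\pi)\,\dd\mu_0\,\dd y = K$, up to a universal constant. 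For the second factor, I would integrate in $y$ first: for each fixed $x$ the inner integral equals $2\,d_{\mathrm{Hell}}^2\!\left(\pi(\cdot|x), \tilde\pi(\cdot|x)\right)$ by the definition of the Hellinger distance (with Lebesgue measure on $\mathcal{Y} = \R^p$ as reference), so after integrating against $\mu_0$ the factor becomes $2\,\E^{\mu_0}\!\left[d_{\mathrm{Hell}}^2\!\left(\pi(\cdot|X), \tilde\pi(\cdot|X)\right)\right]$. Applying Lemma~\ref{lem:Hell-KL} to the pair $\left(\tilde\pi(\cdot|x), \pi(\cdot|x)\right)$ and using the symmetry of the Hellinger distance bounds this by $\E^{\mu_0}\!\left[D_{\mathrm{KL}}\!\left(\tilde\pi(\cdot|X) \parallel \pi(\cdot|X)\right)\right]$, which is precisely the divergence appearing in the statement. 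Combining the two factors produces an estimate of the asserted form.

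Two points require care rather than ingenuity. The first is measure-theoretic bookkeeping: the application of Cauchy--Schwarz and the interchange of the $x$- and $y$-integrations are only meaningful once the relevant integrals are known to be finite, which is where the finiteness of $K$ (guaranteed in context by Assumption~\ref{ass:1}~(A3)) and the absolute continuity $\tilde\pi \ll \pi$ (needed for the divergences to be defined) enter. The second is getting the \emph{direction} of the Kullback--Leibler divergence right: the Hellinger distance is symmetric, but $D_{\mathrm{KL}}$ is not, so one must invoke Lemma~\ref{lem:Hell-KL} with the arguments ordered so that the bound is expressed through $D_{\mathrm{KL}}(\tilde\pi \parallel \pi)$ rather than its reverse. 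I expect the only genuinely delicate step to be tracking the multiplicative constants arising from the factor $\tfrac12$ in the definition of the Hellinger distance, the factor $\tfrac12$ in Lemma~\ref{lem:Hell-KL}, and the inequality $\left(\sqrt{a} + \sqrt{b}\right)^2 \le 2(a+b)$, and checking that they collapse to the constant displayed in the lemma.
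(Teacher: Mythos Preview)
Your proposal is correct and follows essentially the same route as the paper: factor $\pi-\tilde\pi=(\sqrt{\pi}-\sqrt{\tilde\pi})(\sqrt{\pi}+\sqrt{\tilde\pi})$, apply Cauchy--Schwarz, bound the first factor via $(\sqrt{a}+\sqrt{b})^2\le 2(a+b)$, recognise the second factor as twice the expected squared Hellinger distance, and finish with Lemma~\ref{lem:Hell-KL}. Your caution about the constants is apt---carrying them through honestly yields $|I|^2\le 2K\,\E^{\mu_0}[D_{\mathrm{KL}}(\tilde\pi\parallel\pi)]$ rather than the displayed constant~$1$, a harmless slip in the paper that is absorbed into the generic constants of Proposition~\ref{prop:main} and Theorem~\ref{thm:main1}.
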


\begin{proof}
By the Cauchy--Schwartz inequality,
\begin{eqnarray*}
I^2 & = & \left(\int_{\mathcal{Y}}\int_{\mathcal{X}} \log\left(\frac{\pi(y|x)}{\pi(y)}\right)\left[\sqrt{\pi(y|x)} + \sqrt{\tilde \pi(y|x)}\right] \left[\sqrt{\pi(y|x)} - \sqrt{\tilde \pi(y|x)}\right]\, \dd \mu_{0}(x) \dd y\right)^2 \\
& \le & \int_{\mathcal{Y}}\int_{\mathcal{X}}  \log^2 \left(\frac{\pi(y|x)}{\pi(y)}\right) \left[\sqrt{\pi(y|x)} + \sqrt{\tilde \pi(y|x)}\right]^2\, \dd \mu_{0}(x) \dd y \\
& & \cdot \int_{\mathcal{Y}}\int_{\mathcal{X}}  \left[\sqrt{\pi(y|x)} - \sqrt{\tilde \pi(y|x)}\right]^2 \, \dd \mu_{0}(x) \dd y\\
& \leq & 2 \int_{\mathcal{Y}}\int_{\mathcal{X}}  \log^2 \left(\frac{\pi(y|x)}{\pi(y)}\right) \left[\pi(y|x) + \tilde \pi(y|x)\right]\, \dd \mu_{0}(x) \dd y
\cdot \E^{\mu_{0}} \left[d_{\mathrm{Hell}}^2(\pi(\cdot |X), \tilde \pi(\cdot | X))\right] 
\end{eqnarray*}
Now, thanks to Lemma \ref{lem:Hell-KL},
\begin{eqnarray*}
\E^{\mu_{0}} \left[d_{\mathrm{Hell}}^2(\tilde \pi(\cdot |X), \pi(\cdot | X))\right]   \leq  \dfrac{1}{2} \E^{\mu_{0}} \left[D_{\mathrm{KL}} (\tilde \pi(\cdot|X)) \parallel \pi(\cdot|X)  \right].
\end{eqnarray*}
Therefore
\begin{equation*}
|I|^2 \leq K \E^{\mu_{0}} \left[D_{\mathrm{KL}} (\tilde \pi(\cdot|X)) \parallel \pi(\cdot|X)  \right],
\end{equation*}
as required.
\end{proof}

Let us now consider the second term on the right-hand side of \eqref{eq:first_est}.
	
\begin{lemma} 
\label{lem:convKL}
It holds that
\begin{equation*}
    D_{\mathrm{KL}}(\tilde \pi(\cdot) \parallel \pi(\cdot)) \leq \E^{\mu_{0}} \left[ D_{\mathrm{KL}}(\tilde \pi(\cdot|X) \parallel \pi(\cdot|X))\right].
\end{equation*}
\end{lemma}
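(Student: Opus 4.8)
The plan is to recognize this inequality as a manifestation of the joint convexity of the Kullback--Leibler divergence, which I would establish directly through the integral form of the log-sum inequality applied slicewise in the data variable $y$. Writing out both sides explicitly, the left-hand side is
\begin{equation*}
D_{\mathrm{KL}}(\tilde\pi(\cdot)\parallel\pi(\cdot)) = \int_{\mathcal{Y}} \tilde\pi(y)\log\lb(\frac{\tilde\pi(y)}{\pi(y)}\rb)\,\dd y,
\end{equation*}
while the right-hand side, after an application of Tonelli's theorem, equals $\int_{\mathcal{Y}}\int_{\mathcal{X}} \tilde\pi(y|x)\log(\tilde\pi(y|x)/\pi(y|x))\,\dd\mu_0(x)\,\dd y$. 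The goal is therefore to dominate the integrand of the former by the inner integral of the latter for each fixed $y$.

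First I would fix $y\in\mathcal{Y}$ and recall that by definition $\tilde\pi(y)=\int_{\mathcal{X}}\tilde\pi(y|x)\,\dd\mu_0(x)$ and $\pi(y)=\int_{\mathcal{X}}\pi(y|x)\,\dd\mu_0(x)$. I then apply the log-sum inequality in its integral (measure-theoretic) form with $a(x)=\tilde\pi(y|x)$ and $b(x)=\pi(y|x)$ to obtain the slicewise bound
\begin{equation*}
\tilde\pi(y)\log\lb(\frac{\tilde\pi(y)}{\pi(y)}\rb) \le \int_{\mathcal{X}} \tilde\pi(y|x)\log\lb(\frac{\tilde\pi(y|x)}{\pi(y|x)}\rb)\,\dd\mu_0(x).
\end{equation*}
Integrating this inequality over $y\in\mathcal{Y}$ and interchanging the order of integration on the right via Tonelli's theorem yields precisely $D_{\mathrm{KL}}(\tilde\pi(\cdot)\parallel\pi(\cdot))\le \E^{\mu_0}[D_{\mathrm{KL}}(\tilde\pi(\cdot|X)\parallel\pi(\cdot|X))]$, as claimed.

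To justify the slicewise log-sum inequality I would deduce it from Jensen's inequality for the convex function $\varphi(t)=t\log t$. Setting $B=\pi(y)=\int b\,\dd\mu_0$ and introducing the probability measure $\dd\nu = b\,\dd\mu_0/B$, the quotient $g=a/b$ satisfies $\int a\,\dd\mu_0 = B\,\E^{\nu}[g]$ and $\int a\log(a/b)\,\dd\mu_0 = B\,\E^{\nu}[\varphi(g)]$; convexity of $\varphi$ gives $\varphi(\E^\nu[g])\le \E^\nu[\varphi(g)]$, which is exactly the desired bound after multiplying through by $B$. (Equivalently, one may argue abstractly: the average conditional divergence equals $D_{\mathrm{KL}}$ between the joint laws $\tilde\pi(y|x)\,\dd y\,\dd\mu_0$ and $\pi(y|x)\,\dd y\,\dd\mu_0$, which dominates the divergence between their $y$-marginals by the data processing inequality.)

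The main obstacle is not the inequality itself but the measure-theoretic bookkeeping. I must ensure $\tilde\pi(y|x)\ll\pi(y|x)$ so that $g=a/b$ is well defined $\mu_0$-a.e., which is guaranteed by the standing equivalence of the measures assumed in the definition of $D_{\mathrm{KL}}$; I must handle the null sets where $\pi(y)=0$ using the convention $0\log 0 = 0$; and I must verify that Tonelli's theorem legitimately applies, where the nonnegativity of the Hellinger-type integrands together with the finiteness provided by Assumption (A3) makes the interchange of integration order valid and keeps every quantity finite.
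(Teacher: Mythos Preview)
Your proof is correct but takes a genuinely different route from the paper's. You establish the inequality pointwise in $y$ via the log-sum inequality (equivalently, Jensen applied to $t\mapsto t\log t$) and then integrate; this is the joint-convexity/data-processing viewpoint you also mention parenthetically. The paper instead proves an exact \emph{chain-rule identity}: writing $\tilde\pi(y|x)/\pi(y|x) = (\tilde\pi(y)/\pi(y))\cdot(\dd\tilde\mu^y/\dd\mu^y)(x)$ via the posterior Radon--Nikodym derivatives \eqref{eq:Bayes2} and unfolding the expectation gives
\[
\E^{\mu_0}\bigl[D_{\mathrm{KL}}(\tilde\pi(\cdot|X)\parallel\pi(\cdot|X))\bigr]
= D_{\mathrm{KL}}(\tilde\pi(\cdot)\parallel\pi(\cdot)) + \E^{\tilde\pi}\bigl[D_{\mathrm{KL}}(\tilde\mu^{Y}\parallel\mu^{Y})\bigr],
\]
after which nonnegativity of the second term yields the claim. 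Your argument is more elementary in that it avoids introducing the posteriors $\mu^y,\tilde\mu^y$; the paper's argument has the advantage of identifying the slack in the inequality explicitly as the evidence-averaged posterior divergence. One minor correction: your appeal to Assumption~(A3) for finiteness is misplaced, since (A3) bounds a different integral. The clean justification is that if $\E^{\mu_0}[D_{\mathrm{KL}}(\tilde\pi(\cdot|X)\parallel\pi(\cdot|X))]=+\infty$ the inequality is trivial, and otherwise the decomposition $a\log(a/b)=[a\log(a/b)-a+b]+(a-b)$ into a nonnegative part and a part with vanishing marginals legitimises the Fubini/Tonelli interchange.
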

\begin{proof}
Thanks to \eqref{eq:Bayes2} and Fubini's theorem, we have
\begin{eqnarray*}
	\E^{\mu_{0}} \left[ D_{\mathrm{KL}}(\tilde \pi(\cdot|X) \parallel \pi(\cdot|X))\right]  & = &  \int_{\mathcal{Y}}\int_{\mathcal{X}} \log\left(\frac{\tilde \pi(y|x)}{\pi(y|x)}\right) \tilde \pi(y|x)\, \dd \mu_{0}(x) \dd y\\
	& = &  \int_{\mathcal{Y}}\int_{\mathcal{X}} \log\left(\frac{\tilde \pi(y)}{\pi(y)}\right) \tilde\pi(y)\frac{\dd \tilde\mu^{y}}{\dd \mu_{0}}(x)\, \dd \mu_{0}(x) \dd y\\
	& & + \int_{\mathcal{Y}}\int_{\mathcal{X}} \log\left(\frac{\dd \tilde\mu^{y}}{\dd \mu^{y}}(x)\right) \tilde\pi(y)\frac{\dd \tilde\mu^{y}}{\dd \mu_{0}}(x)\, \dd \mu_{0}(x) \dd y\\	
	& = &  \int_{\mathcal{X}}\left[ \int_{\mathcal{Y}} \log\left(\frac{\tilde\pi(y)}{\pi(y)}\right) \tilde \pi(y)\, \dd y\right] \dd \tilde\mu^{y}(x)\\
	& & + \int_{\mathcal{Y}}\left[ \int_{\mathcal{X}} \log\left(\frac{\dd \tilde\mu^{y}}{\dd \mu^{y}}(x)\right) \dd \tilde\mu^{y}(x)\right]\,\tilde\pi(y) \dd y\\
	& = &  D_{\mathrm{KL}}(\tilde \pi(\cdot) \parallel \pi(\cdot))  +  \E^{\tilde\pi(\cdot)}\left[ D_{\mathrm{KL}}(\tilde\mu^{Y}(\cdot) \parallel \mu^{Y}(\cdot)\right]	
\end{eqnarray*} 
Since the Kullback--Leibler divergence is always nonnegative, we obtain
\begin{eqnarray*}
	D_{\mathrm{KL}}(\tilde \pi(\cdot) \parallel \pi(\cdot)) & = & \E^{\mu_{0}} \left[ D_{\mathrm{KL}}(\tilde \pi(\cdot|X) \parallel \pi(\cdot|X))\right] - \E^{\tilde\pi(\cdot)}\left[ D_{\mathrm{KL}}(\tilde\mu^{Y}(\cdot) \parallel \mu^{Y}(\cdot)\right] \\
	& \leq & \E^{\mu_{0}} \left[ D_{\mathrm{KL}}(\tilde \pi(\cdot|X) \parallel \pi(\cdot|X))\right],	
\end{eqnarray*}
which proves the claim.
\end{proof}

\begin{proof}[Proof of Proposition \ref{prop:main}]
    Proposition \ref{prop:main} follows immediately from \eqref{eq:first_est}, Lemma \ref{lem:Iestimate} and Lemma \ref{lem:convKL}.
\end{proof}

\subsubsection{Proof of Theorem \ref{thm:main1}}
Using Proposition \ref{prop:main} by making the dependence of the likelihood, evidence and expected information gain on the design variable explicit, and replacing $\tilde \pi(y|x), \tilde \pi(y)$ and $\widetilde U$ by $\pi_N(y|x;d), \pi_N(y;d)$ and $U_N(d)$, respectively, we have
\begin{eqnarray*}
|U(d) - U_N(d)| & \leq & \sqrt K_1 \sqrt{\E^{\mu_0} D_{\mathrm{KL}}(\pi_N(\cdot|X;d) \parallel \pi(\cdot|X;d))} \\ && + 2 |\E^{\mu_0} D_{\mathrm{KL}}(\pi_N(\cdot|X;d) \parallel \pi(\cdot|X;d))|,
\end{eqnarray*}
where 
\begin{equation*}
K_1:= \int_{\mathcal{Y}} \int_{\mathcal{X}} \log^2 \left(\frac{\pi(y|x;d)}{\pi(y;d)}\right) \left[\pi(y|x;d) + \pi_N(y|x;d)\right]\, \dd \mu_0(x) \dd y.
\end{equation*} 
Thanks to Assumption \ref{ass:1} (A1), $\E^{\mu_0} D_{\mathrm{KL}}(\pi_N(\cdot|X;d) \parallel \pi(\cdot|X;d)) \to 0$ as $N\to \infty$. Therefore, there exists $N_0>0$ large enough such that $|\E^{\mu_0} D_{\mathrm{KL}}(\pi_N(\cdot|X;d) \parallel \pi(\cdot|X;d))| < 1/4$ for $N> N_0$. Note that by Assumption \ref{ass:1} (A3), $K_1\le C_0$ with $C_0$ does not depend on $N$. Now choose $C = \sqrt{C_0} + 1$, we arrive at
\begin{equation*}
|U(d) - U_N(d)| \leq C \sqrt{\psi(N)},
\end{equation*}
for large $N$. Take supremum both sides over $d \in \mathcal{D}$ we conclude the proof of Theorem \ref{thm:main1}.

\subsubsection{Proof of Theorem \ref{thm:main2}}

Let $d\in \mathcal{D}$. Suppose $d_N\to d$ in $\mathcal{D}$. We have
\begin{eqnarray*}
    |U(d) - U_N(d_N)| & \leq & |U(d) - U(d_N)| + |U(d_N) - U_N(d_N)| \\
    & \leq & |U(d) - U(d_N)| + \sup_{d'\in D} |U(d') - U_N(d')|.
\end{eqnarray*}
By Proposition \ref{prop:main}, where we make the dependence of the likelihood, evidence and expected information gain on the design variable explicit, and with $\tilde \pi(y|x), \tilde \pi(y)$ and $\widetilde U$ being replaced by $\pi(y|x;d_N), \pi(y;d_N)$ and $U(d_N)$, respectively, it holds that
\begin{equation} \label{eq:Ucontinuity}
\begin{aligned}
    |U(d) - U(d_N)|  \le & \sqrt{K_2}\sqrt{\E^{\mu_0} D_{\mathrm{KL}}(\pi(\cdot|x; d_N) \parallel \pi(\cdot|x; d))} \\ & + \E^{\mu_0} D_{\mathrm{KL}}(\pi(\cdot|X; d_N) \parallel \pi(\cdot|X; d)), 
\end{aligned} 
\end{equation}
where 
\begin{equation*}
K_2:= \int_{\mathcal{Y}} \int_{\mathcal{X}} \log^2 \left(\frac{\pi(y|x;d)}{\pi(y;d)}\right) \left[\pi(y|x;d) + \pi(y|x;d_N)\right]\, \dd \mu_0(x) \dd y.
\end{equation*} 
It follows from Assumption \ref{ass:1} (A3) that there exists $C_0 > 0$, independent of $d$, such that $K_2 \le C_0$. 
This and \eqref{eq:Ucontinuity} imply, by Assumption \ref{ass:1} (A2),
\begin{equation*}
    \lim_{N\to \infty } |U(d) - U(d_N)| \to 0.
\end{equation*}
Now by Theorem \ref{thm:main1}, and note that $\psi(N) \to 0$ as $N\to \infty$,
\begin{equation}
\label{eq:uniform}
    \lim_{N\to \infty} \sup_{d'\in D} |U(d') - U_N(d')| \to 0.
\end{equation}
It follows that, for every $d_N \to d$,
\begin{equation}
\label{eq:continuous_conv}
    \lim_{N\to \infty} U_N(d_N) = U(d).
\end{equation}
Therefore, the liminf inequality in Definition \ref{def:Gamma_convergence} is satisfied (as an equality). The limsup inequality is trivial, since for every $d$ we can choose the constant sequence $d_N = d$, and it follows from \eqref{eq:uniform} that $\limsup_{N\to \infty} U_N(d_N) = \limsup_{N\to \infty} U_N(d) = U(d)$. Thus,
\begin{equation*}
    U_N \quad \Gamma\text{-converges to } \; U \; \text{ as } \; N\to\infty. 
\end{equation*}
The rest follows from the Fundamental Theorem of $\Gamma$-convergence (Theorem \ref{thm:Gamma_convergence}). This completes the proof.

\begin{remark}
It follows from \eqref{eq:continuous_conv} that $U_N$ \emph{continuously converges} to $U$ (see \citet[Definition 4.7]{dal1993introduction}), which is strictly stronger than $\Gamma$-convergence. In fact, this continuous convergence also implies that $-U_N$ $\Gamma$-converges to $-U$, which ensures (by symmetry) that the limit of any convergent maximizing sequences of $U_N$ is a maximizer of $U$. (Note carefully that $U_N$ $\Gamma$-converges to $U$ does \emph{not} imply $-U_N$ $\Gamma$-converges to $-U$, see \citet[Example 1.12]{braides2002gamma}).
\end{remark}

\section{Gaussian likelihood in Bayesian inverse problem}
\label{sec:example}
	
In this section, we consider the inverse problem
\begin{equation} \label{eq:inverse_example}
	y = {\mathcal G}(x;d) + \epsilon,
\end{equation}
for $x\in \mathcal{X}$ and $y, \epsilon \in \R^p$, where the noise has multivariate Gaussian distribution $\epsilon \sim {\mathcal N}(0, \Gamma)$ with some positive definite matrix $\Gamma$. Suppose we approximate the observation map ${\mathcal G}$ with a surrogate model ${\mathcal G}_N$. 
Now, thanks to Lemma \ref{lem:KL_Gaussian},
\begin{equation*}
    \E^{\mu_0} D_{\mathrm{KL}}(\pi_N(\cdot | X; d) \parallel \pi(\cdot | X; d)) = \frac{1}{2} \E^{\mu_0} \norm{{\mathcal G}(X;d) - {\mathcal G}_N(X;d)}_{\Gamma}^2
\end{equation*}
and 
\begin{equation}
	\label{eq:gauss_like_DKL2}
    \E^{\mu_0} D_{\mathrm{KL}}(\pi(\cdot | X; d_N)
    \parallel \pi(\cdot | X; d)) = \frac{1}{2} \E^{\mu_0} \norm{{\mathcal G}(X;d) - {\mathcal G}(X;d_N)}_{\Gamma}^2
\end{equation}
giving a natural interpretation to (A1) and (A2) in Assumption \ref{ass:1} in terms of convergence in $\Gamma$-weighted $L^2(\mu_0)$, i.e., ${\mathcal G}_N(X;d)$ should converge to ${\mathcal G}(X;d)$ uniformly in $d$, while ${\mathcal G}(X;d)$ is required to be $L^2(\mu_0)$-continuous with respect to $d$.

Let us next make the following assumption on ${\mathcal G}$ and ${\mathcal G}_N$. 

\begin{assumption}\label{ass:2} 
The observation operator and its surrogate version are bounded in $\Gamma$-weighted $L^4(\mu_0)$ uniformly in $d$, that is, there exists constant $C_G>0$ and  such that
\begin{equation*}
	\sup_{d\in \mathcal{D}} \int_{{\mathcal X}}\norm{{\mathcal G}(x; d)}_{\Gamma}^4 \,\dd \mu_0(x) \le C_G, \quad \sup_{d\in \mathcal{D}} \int_{{\mathcal X}}\norm{\mathcal{G}_N(x;d)}_{\Gamma}^4 \,\dd \mu_0(x) \le C_G
\end{equation*}
for all $N\in \mathbb{N}$.
\end{assumption}

It turns out that  the third condition (A3) in Assumption \ref{ass:1} is implied by Assumption \ref{ass:2}.
	
\begin{proposition} \label{prop:Gaussian}
Let Assumption \ref{ass:2} hold. It follows that, for Gaussian noise $\epsilon \sim {\mathcal N}(0, \Gamma)$, there exists a constant $C$ depending only on $\tr(\Gamma)$ and $C_G$ such that
\begin{equation}
\label{eq:Prop_Kfinite}
    K := \int_{\mathbb{R}^p} \int_{\mathcal{X}} \log^2 \left(\frac{\pi(y|x;d)}{\pi(y;d)}\right) \left[\pi(y|x;d) +\pi_N(y|x;d))\right]\, \dd \mu_0(x) \dd y \le C.
\end{equation}
\end{proposition}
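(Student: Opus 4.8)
The plan is to reduce the estimate on $K$ to finite fourth moments of Gaussian distributions together with the $L^4(\mu_0)$ bounds provided by Assumption \ref{ass:2}. Writing the Gaussian likelihood explicitly, the normalization constants cancel in the ratio $\pi(y|x;d)/\pi(y;d)$, leaving
\begin{equation*}
\log\lb(\frac{\pi(y|x;d)}{\pi(y;d)}\rb) = -\frac12 \norm{y - {\mathcal G}(x;d)}_\Gamma^2 - \log Z(y;d), \qquad Z(y;d) := \int_{\mathcal X} \exp\lb(-\tfrac12\norm{y-{\mathcal G}(x';d)}_\Gamma^2\rb)\dd\mu_0(x').
\end{equation*}
The first summand is explicit and harmless; the difficulty is the evidence term $\log Z(y;d)$, which averages the likelihood over the prior and admits no closed form.

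The key device is Jensen's inequality applied to $Z$. Since the integrand is bounded by $1$, we have $Z(y;d)\le 1$, hence $-\log Z(y;d)\ge 0$; and by concavity of the logarithm, $\log Z(y;d)\ge -\frac12\int_{\mathcal X}\norm{y-{\mathcal G}(x';d)}_\Gamma^2\dd\mu_0(x')$. These two-sided bounds give the pointwise estimate
\begin{equation*}
\lb|\log\lb(\frac{\pi(y|x;d)}{\pi(y;d)}\rb)\rb| \le \frac12\norm{y-{\mathcal G}(x;d)}_\Gamma^2 + \frac12\int_{\mathcal X}\norm{y-{\mathcal G}(x';d)}_\Gamma^2\dd\mu_0(x').
\end{equation*}
Squaring, using $(a+b)^2\le 2a^2+2b^2$ and Jensen once more to move the square inside the $x'$-integral, bounds $\log^2(\cdot)$ by a sum of the quartic quantities $\norm{y-{\mathcal G}(x;d)}_\Gamma^4$ and $\int_{\mathcal X}\norm{y-{\mathcal G}(x';d)}_\Gamma^4\dd\mu_0(x')$.

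With this pointwise bound I would split $K = K^{(1)}+K^{(2)}$ according to the two densities $\pi(y|x;d)$ and $\pi_N(y|x;d)$, and perform the $y$-integration first. Integrating against $\pi(y|x;d)$ is an expectation over $y\sim{\mathcal N}({\mathcal G}(x;d),\Gamma)$, so substituting $y={\mathcal G}(x;d)+\epsilon$ with $\epsilon\sim{\mathcal N}(0,\Gamma)$ turns $\norm{y-{\mathcal G}(x;d)}_\Gamma^4$ into the pure fourth moment $\E\norm{\epsilon}_\Gamma^4$, which is finite by Lemma \ref{lem:moments}. In the terms carrying $\int_{\mathcal X}\norm{y-{\mathcal G}(x';d)}_\Gamma^4\dd\mu_0(x')$ one writes $y-{\mathcal G}(x';d)=\epsilon+{\mathcal G}(x;d)-{\mathcal G}(x';d)$ and expands via $\norm{a+b+c}_\Gamma^4\le C(\norm{a}_\Gamma^4+\norm{b}_\Gamma^4+\norm{c}_\Gamma^4)$, producing the Gaussian moment together with $\int_{\mathcal X}\norm{{\mathcal G}(\cdot;d)}_\Gamma^4\dd\mu_0$, each bounded by $C_G$ uniformly in $d$ by Assumption \ref{ass:2}. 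The piece $K^{(2)}$ is treated identically after substituting $y={\mathcal G}_N(x;d)+\epsilon$; the only change is that $y-{\mathcal G}(x;d)=\epsilon+{\mathcal G}_N(x;d)-{\mathcal G}(x;d)$ now also involves ${\mathcal G}_N$, whose $L^4(\mu_0)$ norm is likewise controlled by $C_G$. Collecting terms yields $K\le C$ with $C$ depending only on $\tr(\Gamma)$ and $C_G$, and independent of both $d$ and $N$.

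The main obstacle is precisely the evidence term $\log Z(y;d)$: since it is an average of the likelihood over the prior, there is no explicit Gaussian structure and a crude bound would be neither integrable in $y$ nor uniform in $d$. The two-sided Jensen estimate is what converts it into quadratic forms amenable to Gaussian integration, after which everything reduces to routine bookkeeping of fourth moments — and it is this appearance of $\log^2$, hence of fourth powers of ${\mathcal G}$ and ${\mathcal G}_N$, that dictates the choice of $L^4$ in Assumption \ref{ass:2}.
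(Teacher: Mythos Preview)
Your proposal is correct and follows essentially the same route as the paper's proof: both reduce $\log^2(\pi(y|x;d)/\pi(y;d))$ to fourth powers of $\norm{y-{\mathcal G}(\cdot;d)}_\Gamma$ via Jensen's inequality on the evidence term, then split according to the two densities and integrate in $y$ using Gaussian fourth moments together with Assumption~\ref{ass:2}. The only cosmetic difference is that the paper applies Jensen directly to $\log^2$ (convex on $(0,1)$, since $T\pi(y|x;d)\in(0,1)$), whereas you obtain the same bound by first applying Jensen to the concave $\log$ and then squaring; the resulting inequality and all subsequent steps coincide.
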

	
\begin{proof}
We rewrite $K$ as a sum $K=K_1 + K_2$, where
\begin{equation*}
    K_1 = \int_{\mathbb{R}^p} \int_{\mathcal{X}} \log^2 \left(\frac{\pi(y|x;d)}{\pi(y;d)}\right) \pi(y|x;d) \, \dd \mu_0(x) \dd y,
\end{equation*}
and 
\begin{equation*}
    K_2 = \int_{\mathbb{R}^p} \int_{\mathcal{X}} \log^2 \left(\frac{\pi(y|x;d)}{\pi(y;d)}\right) \pi_N(y|x;d) \, \dd \mu_0(x) \dd y.
\end{equation*}
By Cauchy inequality $(a+b)^2 \le 2(a^2 + b^2)$,
\begin{eqnarray*}
	\log^2 \left(\frac{\pi(y|x;d)}{\pi(y;d)} \right) 
	& = & \log^2 \left(\dfrac{\exp\left(-\frac{1}{2} \norm{y- {\mathcal G}(x;d)}_{\Gamma}^2\right)}{T\E^{\mu_0}\pi(y|X;d)}\right) \\
	& = & \left( -\frac{1}{2} \norm{y- {\mathcal G}(x; d)}_{\Gamma}^2 - \log \left(T\E^{\mu_0}\pi(y|X;d)\right) \right)^2\\
	& \le & \frac{1}{2}\norm{y- {\mathcal G}(x; d)}_{\Gamma}^4 +  2\log^2 \left(T\E^{\mu_0}\pi(y|X;d)\right),
\end{eqnarray*}
where $T = \int_{\mathbb{R}^p} \exp\left(-\frac{1}{2} \norm{y- {\mathcal G}(x;d)}_{\Gamma}^2\right) \dd y = \sqrt{(2\pi)^p\det(\Gamma)}$.
Clearly, $T\pi(y|x;d) \in (0,1)$ and 
since $\log^2(x)$ is a convex function on $(0,1)$, by Jensen inequality, we have
\begin{equation*}
	\log^2 \left(\E^{\mu_0} \left(T \pi(y|X;d)\right) \right) \le  \E^{\mu_0} \log^2 \left(T \pi(y|X;d)\right)
	=  \frac{1}{4}\E^{\mu_0} \norm{y- {\mathcal G}(X; d)}_{\Gamma}^4.
\end{equation*}
Hence, 
\begin{equation} \label{eq:K1}
\begin{aligned}
	K_1 
	& \le \frac{1}{2} \int_{\mathcal{X}} \int_{\mathbb{R}^p}  \norm{y- {\mathcal G}(x;d)}_{\Gamma}^4 \pi(y|x;d) \,\dd y \,\dd \mu_0(x) \\
	& \quad + \frac{1}{2} \int_{\mathcal{X}} \int_{\mathbb{R}^p} \int_{\mathcal{X}} \norm{y- {\mathcal G}(x;d)}_{\Gamma}^4\dd \mu_0(x) \pi(y|\tilde{x}) \,\dd y \,\dd \mu_0(\tilde{x})\\
	& =:   K_{1,1} + K_{1,2}.
\end{aligned}
\end{equation}
For the first integral in \eqref{eq:K1}, we have
\begin{equation}\label{eq:K11}
K_{1,1} = \frac{1}{2}  \int_{\mathcal{X}} \E^{\pi(\cdot|x;d)} \norm{Y- {\mathcal G}(x;d)}_{\Gamma}^4 \,\dd \mu_0(x) \leq C,
\end{equation}
where the constant $C$ depends only on $p$. Now for the second term in \eqref{eq:K1}, by applying Cauchy inequality repeatedly,
\begin{equation} \label{eq:K12}
\begin{aligned}
	K_{1,2} & = \frac{1}{2} \int_{\mathcal{X}} \int_{\mathbb{R}^p} \int_{\mathcal{X}} \norm{y- {\mathcal G}(x;d)}_{\Gamma}^4\dd \mu_0(x) \pi(y|\tilde{x}) \,\dd y \,\dd \mu_0(\tilde{x})\\
	& \le   4 \int_{\mathcal{X}} \int_{\mathcal{X}} \int_{\mathbb{R}^p} \lb( \norm{y- {\mathcal G}(\tilde{x};d)}_{\Gamma}^4 +  \norm{{\mathcal G}(\tilde{x}; d)- {\mathcal G}(x; d)}_{\Gamma}^4 \rb) \pi(y|\tilde{x}) \,\dd y \,\dd \mu_0(\tilde{x}) \,\dd \mu_0(x)\\
	& =  4 \int_{\mathcal{X}} \int_{\mathbb{R}^p} \norm{y- {\mathcal G}(\tilde{x}; d)}_{\Gamma}^4  \pi(y|\tilde{x}) \,\dd y \,\dd \mu_0(\tilde{x})  + 4 \int_{\mathcal{X}} \int_{\mathcal{X}}   \norm{{\mathcal G}(\tilde{x}; d)- {\mathcal G}(x; d)}_{\Gamma}^4 \,\dd \mu_0(\tilde{x}) \,\dd \mu_0(x) \\
	& \le 8 K_{1,1} + 32 \lb( \int_{\mathcal{X}}   \norm{{\mathcal G}(\tilde{x}; d)}_{\Gamma}^4 \,\dd \mu_0(\tilde{x})  + \int_{\mathcal{X}} \norm{{\mathcal G}(x; d)}_{\Gamma}^4  \,\dd \mu_0(x) \rb) \\
	& \le 8C + 64C_G,
\end{aligned}
\end{equation}
thanks to \eqref{eq:K11} and Assumption \ref{ass:2}. Therefore, by combining \eqref{eq:K1}-\eqref{eq:K12}, $K_{1}$ is bounded by some universal constant $C>0$. Similar arguments apply to the term $K_2$. This completes the proof.
\end{proof}

\begin{remark}
Notice that we could replace Assumption \ref{ass:2} by the following conditions:
\begin{itemize}
\item[(B1)] there exists $G\in \R^p$ and $C>0$ such that 
\begin{equation*}
	\sup_{d\in{\mathcal D}}\norm{{\mathcal G}(x,d) - G}_{\Gamma} \leq C \norm{x - \E^{\mu_0}X}_{{\mathcal X}}, \quad
	\sup_{d\in{\mathcal D}}\norm{{\mathcal G}_N(x,d) - G}_{\Gamma} \leq C \norm{x - \E^{\mu_0}X}_{{\mathcal X}}
\end{equation*}
for all $x\in {\mathcal X}$ and $N\in \mathbb{N}$, and
\item[(B2)] the prior $\mu_0$ has a finite fourth order centered moment.
\end{itemize}
In particular, Gaussian prior \rchange{$\mu_0 \sim {\mathcal N}(0, \Sigma)$}{$\mu_0 = {\mathcal N}(0, \Sigma)$} satisfies (B2) and we find that, thanks to Lemma \ref{lem:moments},
\begin{equation*}
    K \le C\lb(1 + \tr(\Sigma) \rb),
\end{equation*}
for some universal constant $C$ depending on $p$, if (B1) is also satisfied. 
\end{remark}

Proposition \ref{prop:Gaussian} leads to the following main result of this section.

\begin{theorem} \label{thm:Gaussian}
Consider the inverse problem \eqref{eq:inverse_example} with an observation operator and surrogate ${\mathcal G}, {\mathcal G}_N : {\mathcal X} \times {\mathcal D}\to \R^p$. Suppose the noise is zero-mean Gaussian, say, $\epsilon \sim \mathcal{N}(0,\Gamma)$. 
Let Assumption \ref{ass:2} hold and assume that 
\begin{equation*}
    \E^{\mu_0} \norm{{\mathcal G}(X;d) - {\mathcal G}_N(X;d)}_{\Gamma}^2 < C\psi(N), \quad \psi(N)\to 0 \text{ as } N\to \infty.
\end{equation*}
Assume further that $\mathcal{G}$ is a continuous function in $d\in \mathcal{D}$. Then there exists $C>0$ such that
\begin{equation*}
    \sup_{d\in \mathcal{D}} |U(d) - U_N(d)| \le C\sqrt{\psi(N)},
\end{equation*}
for all $N$ sufficiently large. Moreover, if $\{d^*_N\}$ is a maximizing sequence of $U_N$ then the limit of any converging subsequence of $\{d^*_N\}$ is a maximizer of $U$.
\end{theorem}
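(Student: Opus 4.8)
The plan is to show that Assumption \ref{ass:1} (A1)--(A3) holds under the present hypotheses, and then simply invoke Theorem \ref{thm:main1} for the uniform stability estimate and Theorem \ref{thm:main2} for the convergence of maximizers. The reduction to verifying the three conditions is essentially the whole content of the argument, since the heavy lifting has already been carried out in the general setting. Throughout I would rely on the identities already recorded in the text, which follow from Lemma \ref{lem:KL_Gaussian}, namely
\[
\E^{\mu_0} D_{\mathrm{KL}}(\pi_N(\cdot|X;d)\parallel \pi(\cdot|X;d)) = \tfrac12 \E^{\mu_0}\norm{{\mathcal G}(X;d)-{\mathcal G}_N(X;d)}_\Gamma^2,
\]
together with the analogous design-perturbed expression \eqref{eq:gauss_like_DKL2}.

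With these in hand, (A1) is immediate: the hypothesis $\E^{\mu_0}\norm{{\mathcal G}(X;d)-{\mathcal G}_N(X;d)}_\Gamma^2 < C\psi(N)$ yields $\E^{\mu_0} D_{\mathrm{KL}}(\pi_N(\cdot|X;d)\parallel \pi(\cdot|X;d)) \le \tfrac{C}{2}\psi(N)$ uniformly in $d$, which is exactly (A1). Assumption (A3) follows from Proposition \ref{prop:Gaussian}: that result bounds the integral involving $\pi(y|x;d)+\pi_N(y|x;d)$, and since Assumption \ref{ass:2} is uniform in $d$, the map ${\mathcal G}(\cdot;d_N)$ obeys the same $\Gamma$-weighted $L^4(\mu_0)$ bound as ${\mathcal G}$ and ${\mathcal G}_N$. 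Repeating the estimate verbatim for the extra summand $\pi(y|x;d_N)$ then gives the full bound \eqref{eq:assA3}.

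The remaining and most delicate point is (A2), where I would upgrade the pointwise continuity of ${\mathcal G}$ in $d$ to $\Gamma$-weighted $L^2(\mu_0)$-continuity. For a sequence $d_N\to d$ in $\mathcal{D}$, continuity gives $\norm{{\mathcal G}(x;d)-{\mathcal G}(x;d_N)}_\Gamma^2 \to 0$ for $\mu_0$-a.e.\ $x$. To pass this limit under the integral in \eqref{eq:gauss_like_DKL2}, I would invoke uniform integrability: the uniform $L^4$ bound of Assumption \ref{ass:2}, combined with the convexity estimate $(a+b)^4 \le 8(a^4+b^4)$, shows that the family $\{\norm{{\mathcal G}(\cdot;d)-{\mathcal G}(\cdot;d_N)}_\Gamma^2\}_N$ is bounded in $L^2(\mu_0)$ and hence uniformly integrable. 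By the Vitali convergence theorem, $\E^{\mu_0}\norm{{\mathcal G}(X;d)-{\mathcal G}(X;d_N)}_\Gamma^2 \to 0$, which together with \eqref{eq:gauss_like_DKL2} is precisely (A2).

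Once all three conditions are in place, the uniform stability estimate $\sup_{d\in\mathcal{D}} |U(d)-U_N(d)| \le C\sqrt{\psi(N)}$ is exactly the conclusion of Theorem \ref{thm:main1}, and the statement about maximizers is exactly the conclusion of Theorem \ref{thm:main2}. I expect the main obstacle to be the uniform-integrability step in (A2): without the fourth-moment control of Assumption \ref{ass:2} one cannot justify moving the limit inside the expectation, so it is precisely there that the standing hypotheses do genuine analytic work beyond the bookkeeping needed for (A1) and (A3).
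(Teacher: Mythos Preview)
Your proposal is correct and follows exactly the route the paper intends: the paper does not supply a separate proof of Theorem \ref{thm:Gaussian}, treating it as an immediate consequence of the Gaussian KL identities recorded before Assumption \ref{ass:2}, Proposition \ref{prop:Gaussian}, and the general Theorems \ref{thm:main1}--\ref{thm:main2}. Your Vitali/uniform-integrability argument for (A2) is a genuine detail the paper leaves implicit (it only remarks that (A2) amounts to $L^2(\mu_0)$-continuity of ${\mathcal G}$ in $d$), so you have in fact filled a small gap rather than deviated from the paper's approach.
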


\begin{remark}
Consider the case of linear observation mappings ${\mathcal G}(x; d) = {\mathcal G}(d) x, \; {\mathcal G}_N(x; d) = {\mathcal G}_N(d)x$, and a Gaussian prior \rchange{$\mu_0 \sim {\mathcal N}(0,C_0)$}{$\mu_0 = {\mathcal N}(0,C_0)$}. It follows that
\begin{eqnarray}
	\label{eq:lingaussrem}
	\E^{\mu_0} D_{\mathrm{KL}}(\pi_N(\cdot | X;d) \parallel \pi(\cdot | X;d))  & = & \frac{1}{2} \E^{\mu_0} \norm{({\mathcal G}(d) - {\mathcal G}_N(d))X}_{\Gamma}^2 \nonumber\\
		& = & \norm{C_0^{\frac 12} ({\mathcal G}(d) - {\mathcal G}_N(d)) \Gamma^{-\frac 12}}_{HS}^2,
\end{eqnarray}
where $\norm{\cdot}_{HS}$ stands for the Hilbert--Schmidt norm. Notice that in the case of linear observation map and Gaussian prior, the expected information gain can be explicitly solved. Consequently, the identity \eqref{eq:lingaussrem} can provide intuition regarding the sharpness of Theorem \ref{thm:main2} as demonstrated in the following example.
%
%
\end{remark}


\begin{example}
\label{example:optimality}
Consider the simple inference problem with observation map ${\mathcal G}(x) = ax$, $a>0$, with additive normally distributed noise $\epsilon$. Here, we omit the dependence on $d$. Suppose the prior distribution $\mu_0$ is also normal.
with Gaussian noise $\epsilon \sim \mathcal{N}(0, 1)$ and prior \rchange{$\mu_0 \sim \mathcal{N}(0, 1)$}{$\mu_0 = \mathcal{N}(0, 1)$}. Moreover, suppose we approximate ${\mathcal G}(x)$ with a surrogate $\mathcal{G}_N(x) = a_Nx$ for some $a \leq a_N \leq C$.  
A straightforward calculation will now yield
\begin{equation*}
    |U_N - U| = \frac{1}{2}\lb|\log \frac{a_N^2+1}{a^2+1}\rb|.
\end{equation*}
Now since
$1 - \frac{1}{x} \le \log (x) \le x - 1$, 
for all $x>0$, we deduce that
\begin{equation} \label{eq:ex2}
    \frac{|a_N-a|(a_N+a)}{2(a_N^2+1)} \le |U_N - U| \le \frac{|a_N-a|(a_N+a)}{2(a^2+1)}.
\end{equation}
We observe from \eqref{eq:ex2} that
the best possible convergence rate is $|a_N - a|$ and, indeed, by Lemma \ref{lem:KL_Gaussian} we have
\begin{equation*}
    \E^{\mu_0} D_{\mathrm{KL}} \lb(\pi_N(\cdot\; |\; X) \parallel \pi(\cdot \; | \; X) \rb) = \frac 12 |a_N - a|^2.
\end{equation*}
In consequence, the convergence rate of Theorem \ref{thm:main1} is asymptotically sharp.
\end{example}

\section{Numerical Simulations}
\label{sec:numeric}

Let us numerically demonstrate the convergence rates predicted by Theorems \ref{thm:main1} and \ref{thm:main2} with three examples. Note that these examples were also featured in \citep{huan2013simulation, huan2014gradient} for numerical demonstrations.

\subsection{Piecewise linear interpolation in one dimension \label{1D Problem}}

Consider a measurement model
\begin{equation}
y(x, d) = \mathcal{G}(x, d)+\eta \in \R^2,
\end{equation}
for $x\in {\mathcal X} = [0,1]$ and $d\in {\mathcal D} = [0,1]^2$, where $\eta \sim \mathcal{N}\left(0,10^{-4}I\right)$ and
\begin{equation}
\label{eq:exampleG}
{\mathcal G}(x, d) =\left[\begin{array}{c}
x^{3} d_{1}^{2}+x \exp \left(-\left|0.2-d_{1}\right|\right) \\
x^{3} d_{2}^{2}+x \exp \left(-\left|0.2-d_{2}\right|\right)
\end{array}\right].
\end{equation}
As the prior distribution $\mu_0$ we assume a uniform distribution on the unit interval\rnew{, that is $\mu_0 = \mathcal{U}(0, 1)$}.

Here we consider a surrogate model $\mathcal{G}_{N}$ obtained by piecewise linear interpolation with respect to $x$. \rnew{More precisely, the expression of the surrogate model is given by 
\begin{equation}
\mathcal{G}_{N}(x,d) = \frac{x_i - x}{h_i}\mathcal{G}(x_{i-1},d) + \frac{x - x_{i-1}}{h_i}\mathcal{G}(x_{i},d), \quad x\in [x_{i-1},x_i], 
\end{equation}
where $h_i=x_i - x_{i-1}$, and $i = 1,\cdots , N $.\\
}
It is well known that the interpolation of $f\in H^{2}(0,1)$ on equidistant nodes $x_{0} = 0 < x_{1} < x_{2} < \cdots < x_{N} = 1$ satisfies 
$\norm{f-f_N}_{L^2(0,1)} \leq C N^{-2} \norm{f''}_{L^2(0,1)}$, see e.g. \cite{han2009theoretical}. Also, notice carefully that for $d_1 = d_2 = 0$ we have ${\mathcal G}$ is linear and the approximation is accurate.
In consequence, we have
\begin{equation}
	\label{eq:pli_1d_aux1}
	\sup_{d\in{\mathcal D}} \E^{\mu_0} \norm{{\mathcal G}(X; d)-{\mathcal G}_N(X; d)}^2
	= \sup_{d\in{\mathcal D}} \int_0^1 \norm{{\mathcal G}(x; d)-{\mathcal G}_N(x; d)}^2dx \leq C N^{-4} .
\end{equation}
Moreover, it is straightforward to see that the mapping $x \mapsto \mathcal{G}(x;d)$ is bounded on the interval $[0,1]$ uniformly in $d$ and, therefore, satisfies the Assumption \ref{ass:2}. 

We have numerically evaluated both the uniform error $E_N := \sup_{d\in {\mathcal D}} |U(d)-U_N(d)|$ and the left-hand side term in inequality \eqref{eq:pli_1d_aux1} with varying $N$.
For evaluating $U(d)$ and $U_N(d)$ we use trapezoidal rule for discretizing $\mathcal{X}$  \rchange{with a}{using an} equidistant grid with 251 nodes. For the data space $\R^2$ we utilized Gauss--Hermite--Smolyak quadrature with 3843 nodes.
For estimating $E_N$ and the left-hand side of \eqref{eq:pli_1d_aux1} we fix a $21\times 21$ grid for the design space ${\mathcal D}$ over which we optimize. Moreover, for any $d$ we numerically solve $\E^{\mu_0} \norm{{\mathcal G}(X; d) - {\mathcal G}_N(X;d)}^2$ using \rnew{midpoint rule for the expectation with} an equidistant grid in $\mathcal{X}$ with 1001 nodes. \rnew{The supreme norm is approximated by calculating the maximum value of the evaluations in the grid of $\mathcal{D}$}.


\begin{figure}[htp!]
\centering\includegraphics[scale = 0.4]{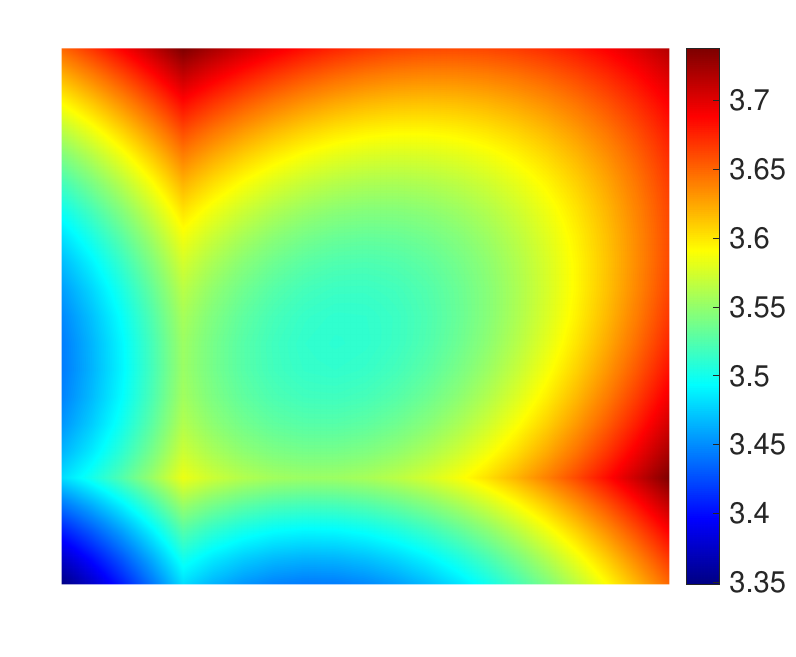}
\centering\includegraphics[scale = 0.4]{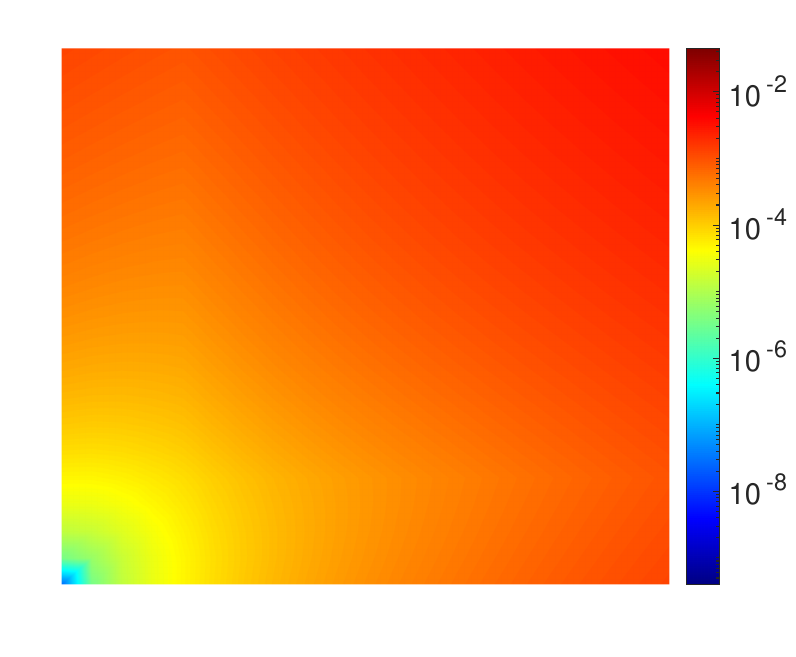}
\centering\includegraphics[scale = 0.4]{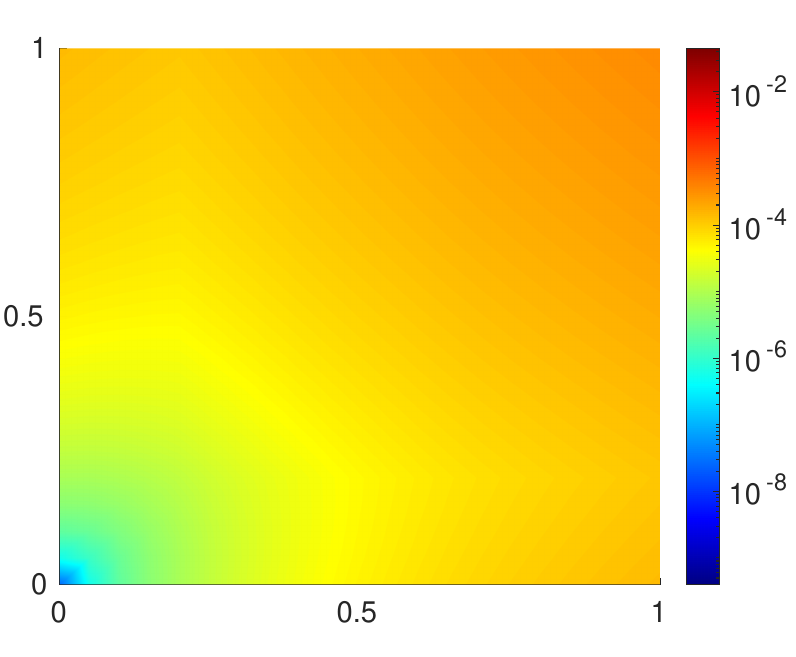}
\centering\includegraphics[scale = 0.4]{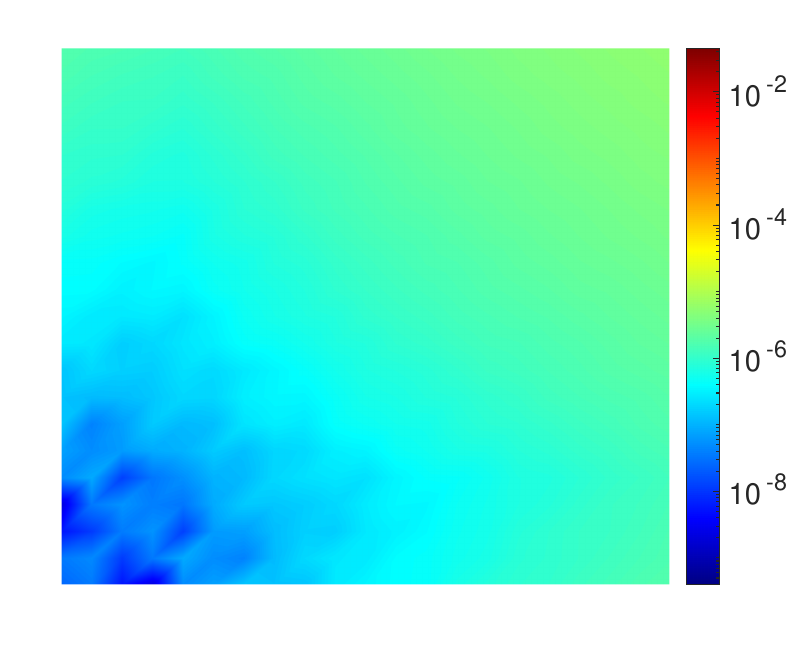}
\caption{Convergence of the expected utility for example \ref{1D Problem} is illustrated \rnew{(evaluating in MATLAB using a grid of $21 \times 21$ for the design space $\mathcal{D}$)}. The true utility $U(d)$ is plotted w.r.t. $d\in {\mathcal D} = [0,1]^2$ on the left upper corner. The approximation error $|U(d)-U_N(d)|$ is plotted with $N = 9$ (right upper corner), $33$ (left bottom corner) and $257$ (right bottom corner). Smaller error towards the origin $d_1=d_2=0$ is due to the linearity of ${\mathcal G}$ at this point.}
\label{Fig_2_1}
\end{figure}


In Figure \ref{Fig_2_1} we have plotted the expected information gain $U$ and the errors $|U_N - U|$ with values $N=9, 33$ and $257$.
The errors $E_N$ and $\sup_{d\in{\mathcal D}} \E^{\mu_0} \norm{{\mathcal G}(X; d)-{\mathcal G}_N(X; d)}^2$ are plotted in Figure \ref{Fig_3} for values varying between $N=2$ and $N=10^3$.
Moreover, we have also added the theoretical upper bound ${\mathcal O}(N^{-2})$.
We observe that the quantities have the same asymptotic behaviour following the theoretical bound.  
 
\begin{figure}[htp!]
\centering\includegraphics[scale = 0.7]{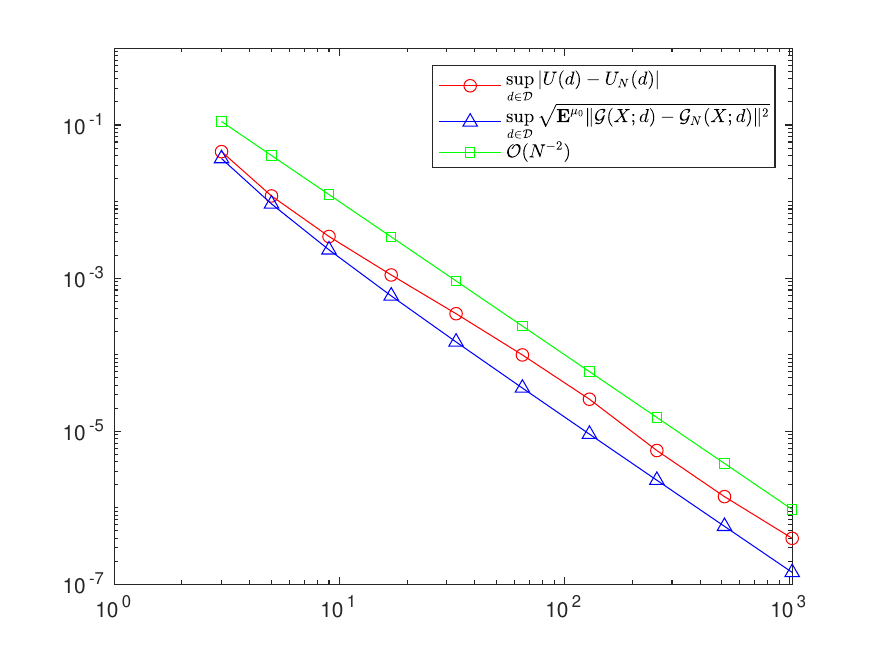}
\caption{
The convergence rate predicted by Theorem \ref{thm:main1} is demonstrated for example \ref{1D Problem}.
We plot the uniform errors of the expected utility (red curve) and uniform $L^2$-distance of the observation map and the surrogate with respect to the prior distribution (blue curve) with varying $N$.
For reference, a line proportional to $N^{-2}$ is plotted.}
\label{Fig_3}
\end{figure}


\subsection{Sparse piecewise linear interpolation in three dimensions}
\label{3D Problem}

Consider the observation mapping ${\mathcal G} : [0,1] \times {\mathcal D} \to \R^2$ for ${\mathcal D}=[0.2,1]^2$ defined by formula \eqref{eq:exampleG}. In this subsection, we formulate a surrogate model ${\mathcal G}_N$ by interpolating data in both $x$ and $d$ variables. We apply a piecewise linear interpolation using a sparse grid with Clenshaw--Curtis configuration \citep{le2010spectral}.
Since ${\mathcal G}$ has continuous second partial derivatives on its domain, the error of this interpolation can be bounded by
\begin{equation}
\Vert \mathcal{G} - \mathcal{G}_{N}\Vert_{L^\infty(\mathcal{X} \times \mathcal{D})} = \mathcal{O}(N^{-2}(\log N)^{3(n-1)}),
\end{equation}
where $n=3$ stands for the dimension of the domain,
see e.g.  \citep{barthelmann2000high, novak1996high, bungartz1998finite}. \rnew{For prior, we also assume here that the prior measure is a uniform distribution, $\mu_0 = \mathcal{U}(0, 1)$.}
Following \eqref{eq:pli_1d_aux1} we immediately observe 
\begin{equation}
    \label{eq:pli_3d_aux1}
	\sup_{d\in{\mathcal D}} \sqrt{\E^{\mu_0} \norm{{\mathcal G}(X; d) - {\mathcal G}_N(X;d)}^2} = \mathcal{O}(N^{-2}(\log N)^6).
\end{equation}
Similar to section \ref{1D Problem} the mappings ${\mathcal G}$ and ${\mathcal G}_N$ are bounded and \rchange{satisfies}{satisfy} Assumption \ref{ass:2}.

\begin{figure}[htp!]
\centering\includegraphics[scale = 0.4]{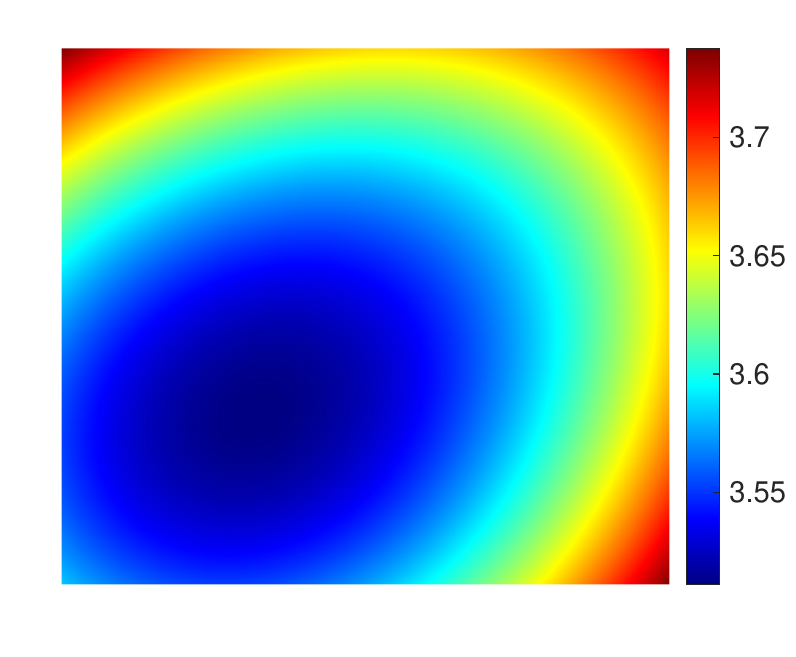}
\centering\includegraphics[scale = 0.4]{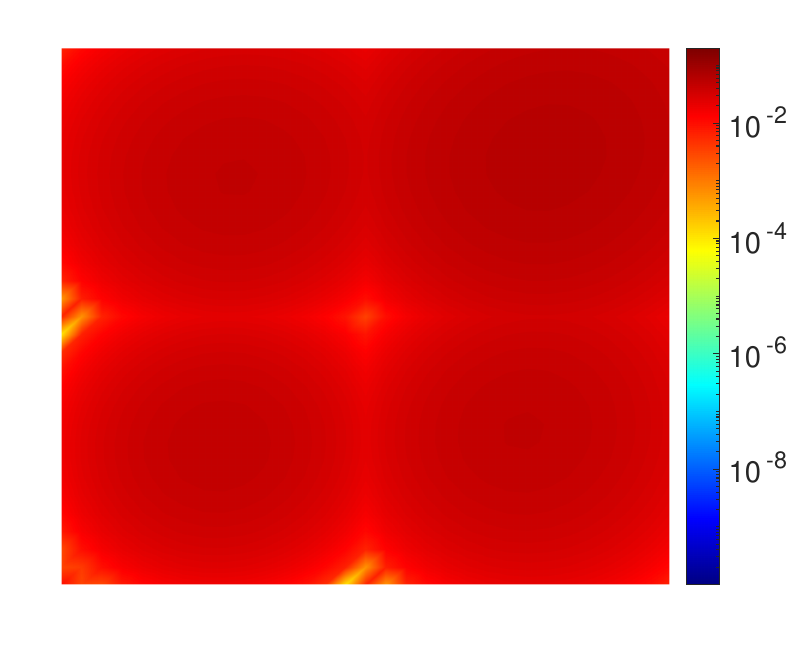}
\centering\includegraphics[scale = 0.4]{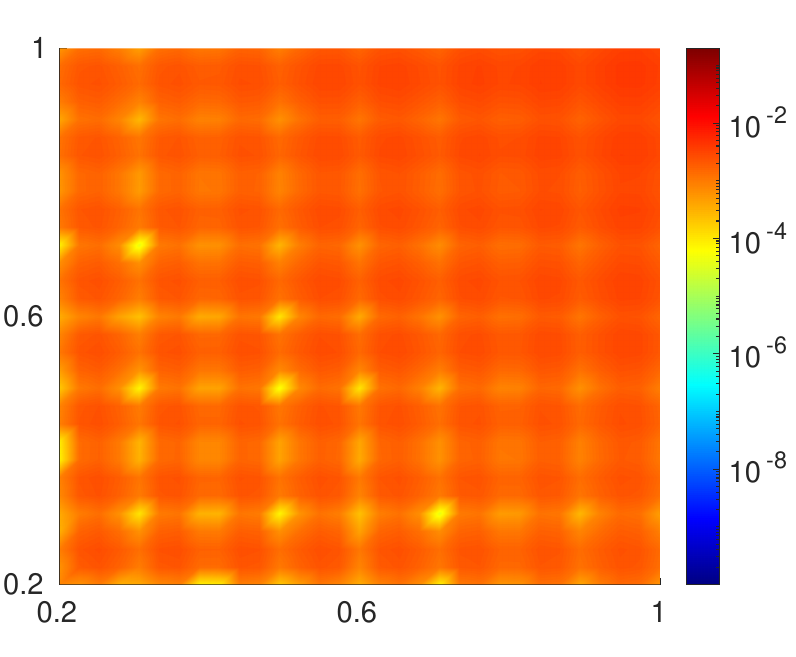}
\centering\includegraphics[scale = 0.4]{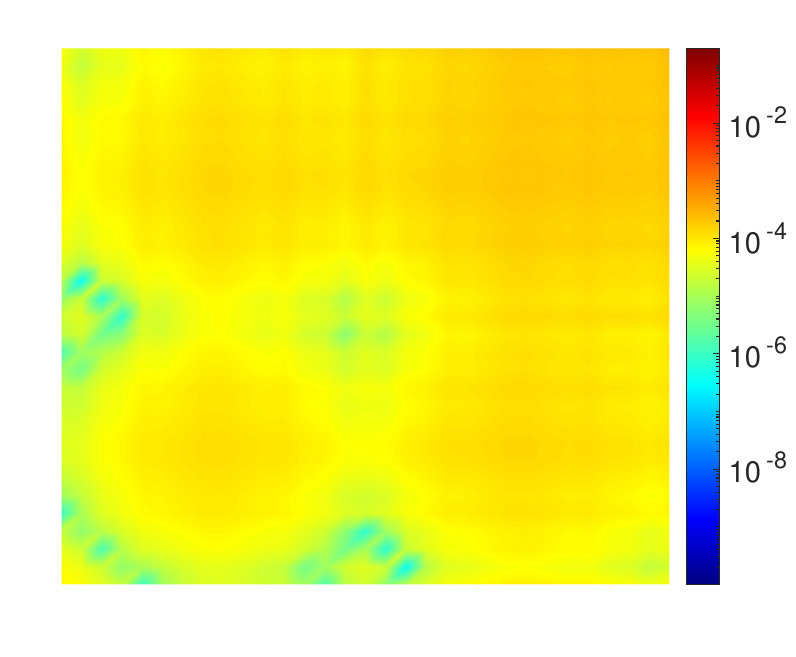}
\caption{Convergence of the expected utility for example \ref{3D Problem} is illustrated. The true utility $U(d)$ is plotted w.r.t. $d\in {\mathcal D} = [0.2,1]^2$ on the left upper corner. The approximation error $|U(d)-U_N(d)|$ is plotted with $N = 18$ (right upper corner), $108$ (left bottom corner) and $632$ (right bottom corner). Smaller error towards the origin $d_1=d_2=0$ is due to the linearity of ${\mathcal G}$ at this point.}
\label{Fig_5}
\end{figure}

We again evaluate the difference $E_N := \sup_{d\in {\mathcal D}} |U(d)-U_N(d)|$ and 
the left-hand side term in inequality \eqref{eq:pli_3d_aux1} with varying $N$.
In the same manner as the previous example, we estimated the expected information gains $U(d)$ and $U_N(d)$ using quadrature rules. For the integral over the space $\mathcal{X}$ we used again trapezoidal rule with 251 equidistant points, while for the space $\mathcal{Y}$ the Gauss--Hermite--Smolyak quadrature with 1730 nodes. 
For estimating $E_N$ and the left-hand side of \eqref{eq:pli_1d_aux1} we fix a $31\times 31$ grid for the design space ${\mathcal D}$ over which we optimize.
For constructing the surrogate we utilized the algorithm and toolbox detailed in \citep{klimke2005algorithm, klimke2006sparse}. \rchange{We use a grid of $1001\times 31 \times 31$ nodes on $\mathcal{X}\times\mathcal{D}$ to evaluate the left-hand side of (\ref{eq:pli_3d_aux1}).}{The supreme norm and the expectation with respect to the prior measure are estimated in the same way as the first example, the left-hand side of (\ref{eq:pli_3d_aux1}) is evaluated with a grid of $1001\times 31 \times 31$ nodes on $\mathcal{X}\times\mathcal{D}$}. 



\begin{figure}[htp!]
\centering\includegraphics[scale = 0.7]{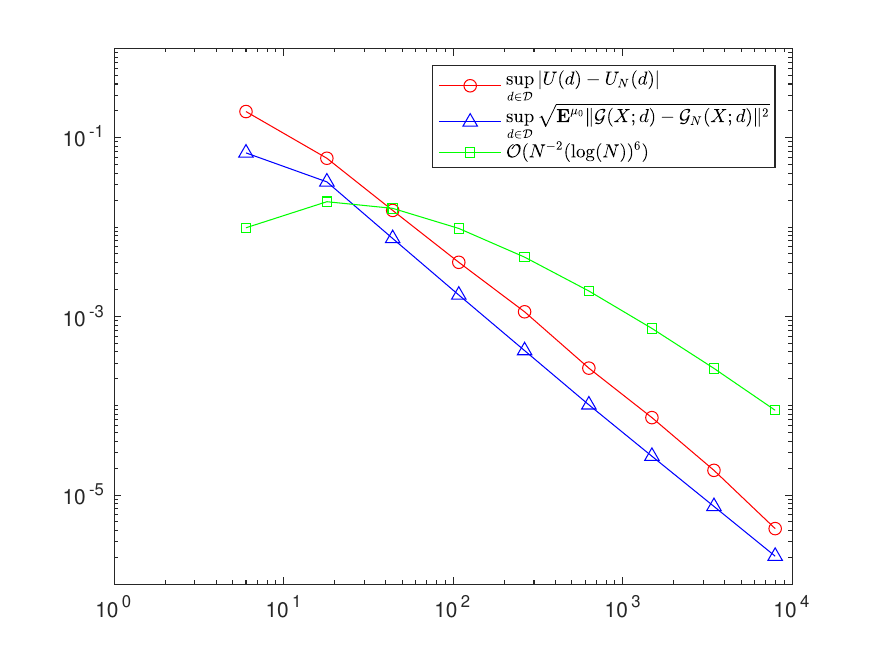}
\caption{The convergence rate predicted by Theorem \ref{thm:main1} is demonstrated for example \ref{3D Problem}.
We plot the uniform errors of the expected utility (red curve) and the uniform $L^2$-distance of the observation map ${\mathcal G}$ and the surrogate ${\mathcal G}_N$ with respect to the prior distribution (blue curve) with varying $N$.
For reference, a curve proportional to $N^{-2}(\log N)^6$ is plotted.
}
\label{Fig_6}
\end{figure}

Figure \ref{Fig_5} plots the expected information gain $U$ and the errors $|U_N - U|$ with values $N=18, 108$ and $632$. 
Again, the errors $E_N$ and $\sup_{d\in{\mathcal D}} \E^{\mu_0} \norm{{\mathcal G}(X; d)-{\mathcal G}_N(X; d)}^2$ are plotted in Figure \ref{Fig_6} for values varying between $N=5$ and $N=10^4$.
We observe that the numerical convergence rates of the error terms are of the same order while the theoretical upper bound also seems to align asymptotically with these rates asymptotically. After $N=10^5$ the convergence rates saturate around the value $10^{-7}$ due to limited numerical precision of our implementation.

\newpage
\subsection{Optimal sensor placement for an inverse heat equation\label{4D Problem}}

Consider the heat equation on the domain $\Omega \times {\mathcal T} = [0,1]^2 \times [0,0.4]$ with a source term $S$ with zero Neumann boundary condition and initial value according to
\begin{align*}
    \frac{\partial v}{\partial t} - \Delta v &= S(\cdot,x), && (z,t) \in (0,1)^2 \times \mathcal{T} \\
    \nabla v \cdot n &= 0, && (z,t) \in \partial\Omega \times \mathcal{T} \\
    v(z,0) &= 0, && z \in \Omega
\end{align*}
where $n$ is a boundary normal vector.
We assume that the source term is given by
\begin{equation*}
S(z,t,x)=
\begin{cases}
\frac{s}{2 \pi h^2} \exp \left(-\frac{\left\|z-x\right\|^2}{2 h^2}\right), & 0 \leq t<\tau, \\
0, & t \geq \tau.
\end{cases}
\end{equation*}
with parameter values $s=2$, $h=0.05$ and $\tau = 0.3$. Moreover, the parameter $x$ is interpreted as the position of the source. We assume that we can observe $v$ at a location $d\in {\mathcal D} := [0.1, 0.9]^2 \subset \Omega$ at predefined times $t_i$, $i=1,...,5$. The inverse problem in this setting is to estimate the source location $x \in {\mathcal X} := \Omega$ given the data $y = \{v(d, t_i)\}_{i=1}^5 \in \R^5$, i.e., invert the mapping
\begin{equation*}
	{\mathcal G} : \Omega \times {\mathcal D} \to \R^5, \quad (x,d) \mapsto y.
\end{equation*}

Here, we consider a Bayesian design problem with the aim to optimize the measurement location $d$ given a uniform prior \rchange{on}{of} the source location $x$ \rnew{on $(0, 1)^2$} and an additive Gaussian noise $\eta \sim \mathcal{N}(0,0.01 I)$ in the measurement.


Numerical implementation of ${\mathcal G}$ was carried out with a finite difference discretization for the spatial grid, while a fourth order backward differentiation was used for the temporal discretization. 
The surrogate observation mapping ${\mathcal G}_N$ is obtained by polynomial chaos expansion with Legendre polynomials. Here, we implemented the projection on an extended domain $\Omega \times \Omega$ instead of $\Omega \times {\mathcal D}$ to avoid any potential boundary issues. The implementation follows \citep{huan2014gradient} and more details can be found therein. In short, we define ${\mathcal G}_N$ as a sum of polynomials
\begin{equation*}
	{\mathcal G}_N(x,d) = \sum_{{\bf j} = 0}^N {\mathcal G}_{{\bf j}} \Psi_{{\bf j}}(x,d),
\end{equation*}
where ${\bf j} \in {\mathbb N}^4$ and $\{\Psi_{{\bf j}} \}_{{\bf j}\in \mathbb{N}^{4}}$ are a Legendre polynomial basis in $\Omega\times \Omega$ with the standard inner product, i.e. the $L^2$-inner product weighted by the uniform prior. Moreover, the coefficients satisfy
\begin{equation}
	\label{eq:PC_coef}
	{\mathcal G}_{{\bf j}}  = \frac{\int_{\Omega\times\Omega} {\mathcal G}(x; d) \Psi_{{\bf j}}(x, d) \dd x \dd y }{\int_{\Omega\times \Omega} \Psi_{{\bf j}}^2(x,d) \dd x\dd y}.
\end{equation}
The parameter $N$ is the truncation level of the polynomial chaos. Notice carefully that we have included the design parameter $d$ in the approximation.



We compute the coefficients ${\mathcal G}_{{\bf j}}$ with a Gauss--Legendre--Smolyak quadrature with Clenshaw-Curtis configuration with a high number (of the order $10^7$) of grid points and assume in the following that the truncation level $N$ is the dominating factor for the surrogate error $\sup_{d\in \Omega} \norm{{\mathcal G}- {\mathcal G}_N}$.
 
The utility functions were estimated as follows: for the integral over the data space $\R^5$ we used the Gauss--Hermite--Smolyak quadrature with $117$ nodes, while on the domain $\Omega$ we implemented a bidimensional Clenshaw--Curtis--Smolyak quadrature with $7682$ nodes. For the evaluation of maximal difference\rnew{ of the expected utility}, we fixed an equidistant \rold{a} grid with $40\times 40$ nodes in $\mathcal{D}$. \rchange{For evaluating $\sup_{d\in{\mathcal D}} \E^{\mu_0}\norm{{\mathcal G}(X; d) - {\mathcal G}_N(X; d)}^2$ we utilized a grid of $25$ nodes in each direction on $\mathcal{X}\times\mathcal{D}$}{For evaluating the expectation in the term $\sup_{d\in{\mathcal D}} \E^{\mu_0}\norm{{\mathcal G}(X; d) - {\mathcal G}_N(X; d)}^2$ we use midpoint rule and a grid of $25$ nodes in each direction on $\mathcal{X}$. The supreme norm is approximated by calculating the maximum element of $25\times 25$ nodes in a grid of $\mathcal{D}$}. 


\begin{figure}[htp!]
\centering\includegraphics[scale = 0.4]{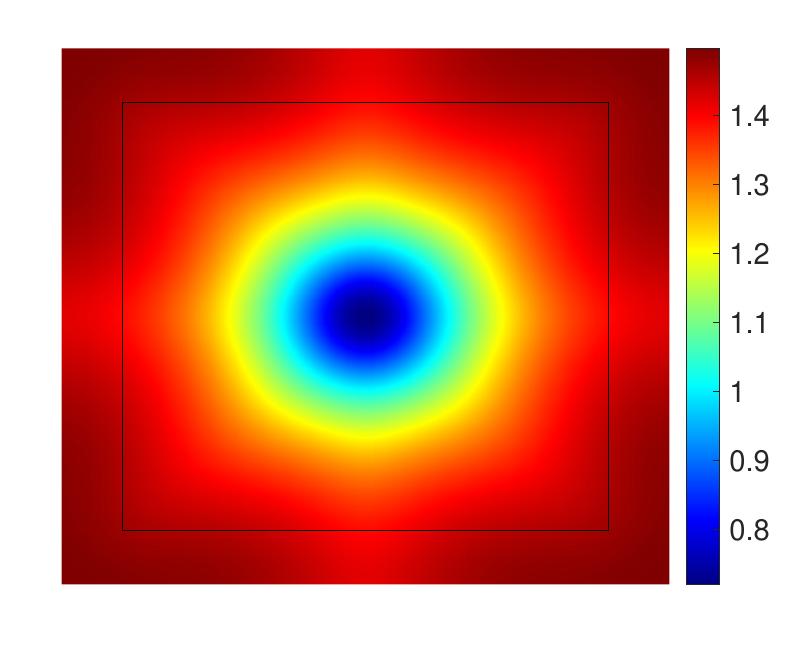}
\centering\includegraphics[scale = 0.4]{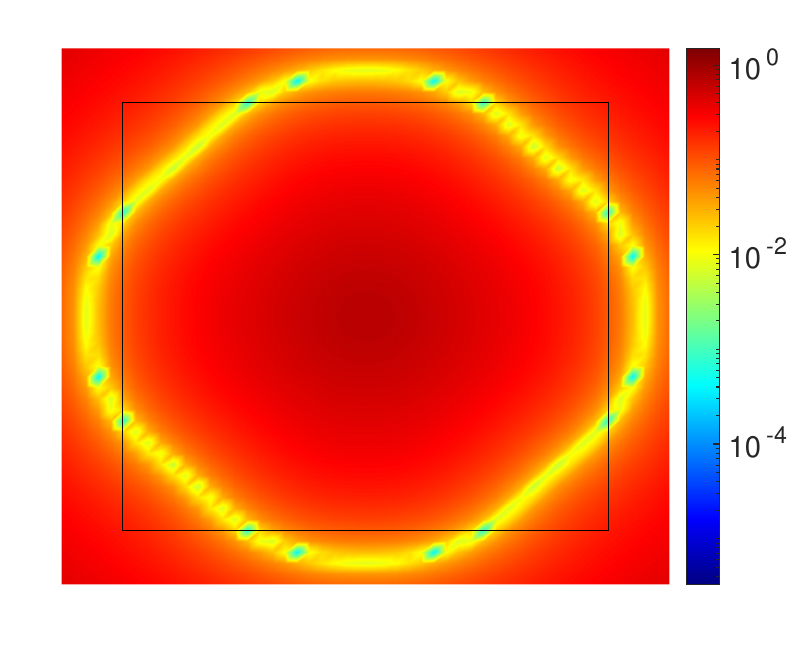}
\centering\includegraphics[scale = 0.4]{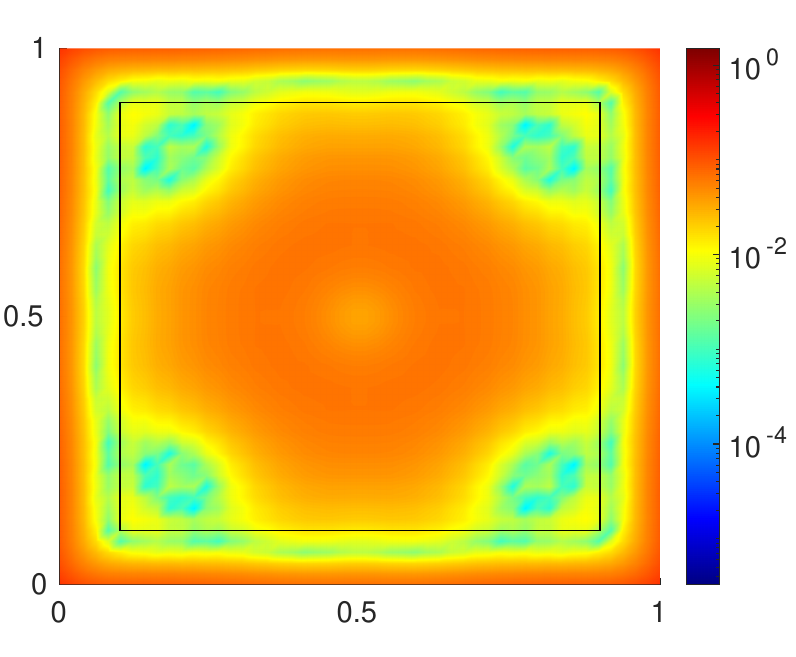}
\centering\includegraphics[scale = 0.4]{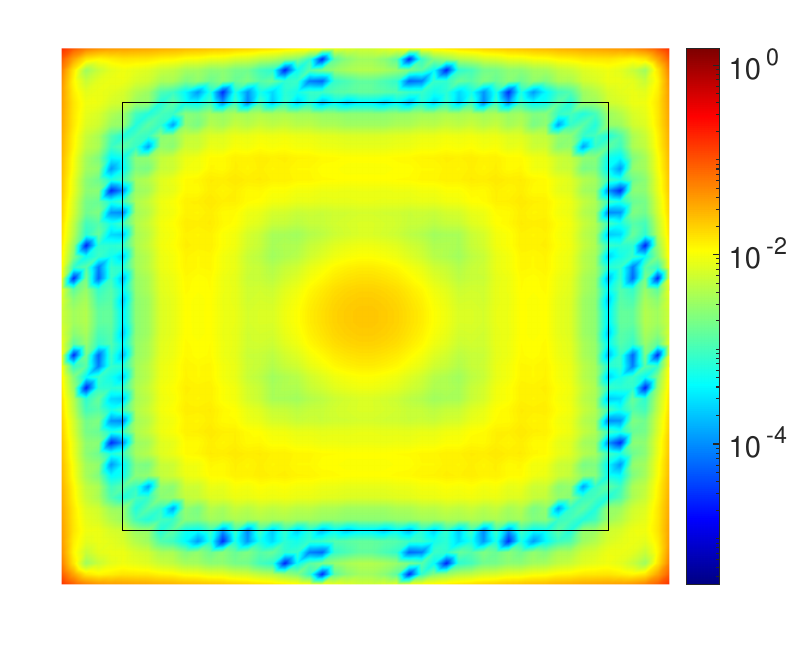}
\caption{Convergence of the expected utility for Example \ref{4D Problem} is illustrated.  The true utility $U(d)$ is plotted on the left upper corner. The approximation error $|U(d)-U_N(d)|$ is plotted with $N = 2$ (right upper corner), $6$ (left bottom corner) and $14$ (right bottom corner). The design space ${\mathcal D}$ is highlighted with a black box.}
\label{Fig_7}
\end{figure}


\begin{figure}[htp!]
\centering\includegraphics[scale = 0.7]{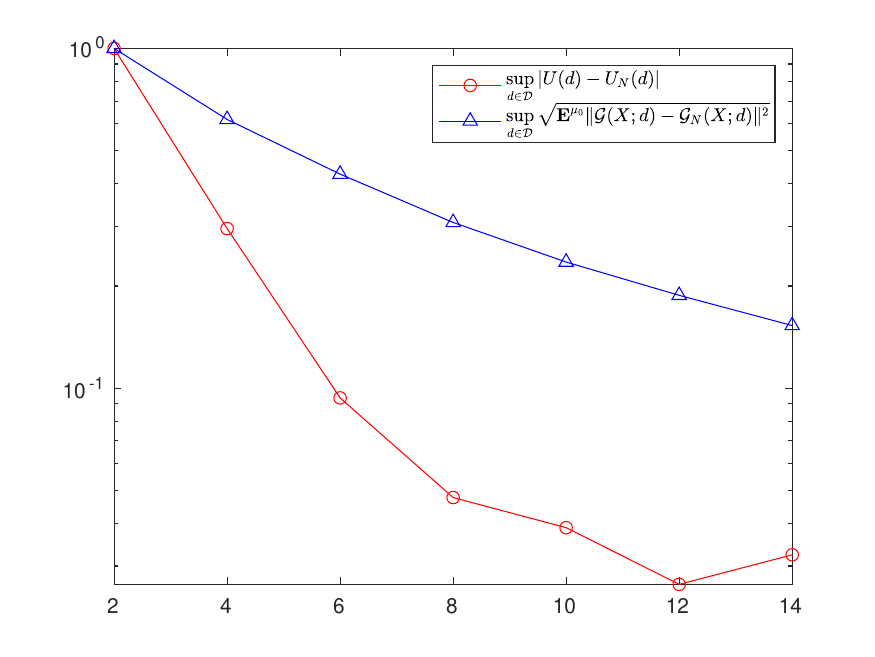}
\caption{
The convergence rate predicted by Theorem \ref{thm:main1} is demonstrated for example \ref{4D Problem}.
We plot the uniform errors of the expected utility (red curve) and the uniform $L^2$-distance of the observation map ${\mathcal G}$ and the polynomial chaos surrogate ${\mathcal G}_N$ with respect to the prior distribution (blue curve) with varying $N$.}
\label{Fig_8}
\end{figure}

In Figure \ref{Fig_7} we show the approximation of the utility functions for every polynomial degree. In both \rnew{cases} we can see a visual convergence with respect to the polynomial degree.
In the spirit of previous examples, Figure \ref{Fig_8} contains the errors between the surrogate models and the utility functions. 
Compared to the previous examples, the complexity of this computational task is substantially larger inducing larger numerical errors especially in the evaluation of $E_N$. Also, as is seen from Figure \ref{Fig_7}, the error close to the boundary of $\Omega$ is converging substantially slower. However, as illustrated by Figure \ref{Fig_8} the two rates of convergence are aligned with ${\mathcal D}$.


\section{Conclusion}
We have developed a framework to study the stability of the expected information gain for optimal experimental designs in infinite-dimensional Bayesian inverse problems. We showed a uniform convergence for the expected information gain, with a sharp rate, given approximations of the likelihood. In the case of Bayesian inverse problems with Gaussian noise, this rate is proved to coincide with the $L^2$-convergence of the observation maps with respect to the prior distribution. Moreover, we also showed that the optimal design variable is also stable in the approximation. The results are illustrated by three numerical experiments.

Possible extensions of this work naturally include considering the stability of various other utilities such as the negative least square loss \citep{CV95} or utilities related to Bayesian optimization \citep{shahriari2015taking}. Here, we only considered perturbations of the utility induced by a surrogate likelihood model. However, a further numerical (such as Monte Carlo based) approximation for the expected utility is most of the time needed. A number of results considering convergence with respect to Monte Carlo error for fixed design have been provided (see e.g. \cite{RDMP16} and references therein) while the uniform case has not been addressed to the best of our knowledge. 
Finally, $\Gamma$-convergence does not directly provide a convergence rate for the optimal designs, which remains an interesting open problem.


%
%

\begin{acks}[Acknowledgments]
The work of TH and DLD was supported by the Academy of Finland (decision numbers 326961 and 345720). 
JRRG was supported by the LUT Doctoral School.
\end{acks}
\bibliographystyle{imsart-nameyear} 
\bibliography{bibliography}       


\end{document}